\documentclass[12pt]{article}
\usepackage[utf8]{inputenc}
\usepackage{xcolor}
\usepackage{mathrsfs}
\usepackage{amsmath}
\usepackage{amssymb}
\usepackage[bookmarks=false,
 breaklinks=false,pdfborder={0 0 1},backref=section,colorlinks=true]
 {hyperref}
\hypersetup{
 citecolor=blue,urlcolor=blue,linkcolor=blue}

\makeatletter

\usepackage{graphicx}
\usepackage{amsfonts}
\usepackage{epstopdf}
\usepackage{cite}
\usepackage{todonotes}
\usepackage{graphicx}
\usepackage{subcaption}\usepackage{caption}\usepackage{xcolor}\usepackage{fullpage}
\usepackage[misc]{ifsym}
\usepackage{algorithm}
\usepackage[noend]{algpseudocode}
\usepackage{mathrsfs}
\usepackage{fontawesome5}
\providecommand{\U}[1]{\protect\rule{.1in}{.1in}}

\newtheorem{theorem}{Theorem}[section]

\newtheorem{lemma}{Lemma}[section]
\newtheorem{definition}{Definition}[section]

\newtheorem{corollary}{Corollary}[section]
\newtheorem{remark}{Remark}[section]
\newtheorem{example}{Example}[section]
\newenvironment{proof}[1][Proof]{\textbf{#1.} }{\ \rule{0.5em}{0.5em}}
\newtheorem{proposition}{Proposition}[section]

\newcommand{\soumitra}[1]{\textcolor{blue}{#1}}

\renewcommand{\soumitra}[1]{#1}

\usepackage{etoolbox}
\makeatother

\begin{document}

\title{Multi-split Multiple-sets Implicit Convex Feasibility Seeking As Split Inverse Problems}
\author{Yair Censor$^{1}$, Soumitra Dey$^{2,*}$\bigskip{}\\
$^{1}$ Department of Mathematics, University of Haifa,
Mt. Carmel,\\ Haifa 3498838, Israel\\
$^{2}$ Department of Mathematics, Hebrew University
of Jerusalem,\\ Jerusalem 9190401, Israel\\
}
\begingroup
\renewcommand{\thefootnote}{\fnsymbol{footnote}}
\footnotetext[0]{%
\begin{minipage}{\textwidth}
\small
\noindent
Yair Censor\\
\faEnvelope\ yair@math.haifa.ac.il

\medskip
\noindent
Soumitra Dey\\
\faEnvelope\ deysoumitra2012@gmail.com

\medskip
\noindent
$^{*}$ Corresponding author
\end{minipage}
}
\endgroup
\date{July 14, 2026}

\maketitle

\soumitra{This paper is dedicated to Professor Simeon Reich on the
occasion of his 80th birthday.}

\bigskip{}
\begin{abstract}
We introduce a new class of problems, termed multi-split multiple-sets
implicit convex feasibility problems (MSMS-ICFPs), define a proximity
function for it and analyze its properties under certain assumptions
on the involved general point-to-set operators. Additionally, we discuss
its properties for a special class of point-to-set operators. For these
kinds of point-to-set operators, we deploy a new iterative algorithm
to solve the associated MSMS-ICFP and show that any sequence generated
by it converges in norm to a solution of the MSMS-ICFP \soumitra{whenever
a solution exists (i.e., in the consistent case)}. \soumitra{We
also define a generalized proximity function for the MSMS-ICFP using
the Bregman distance. Based on this function, we present a generalized
simultaneous projection gradient algorithm for the split feasibility
problem with multiple output sets, by examining various related results}.
We conclude by discussing several further special cases of the MSMS-ICFP. 
\end{abstract}

\textbf{Keywords}: Implicit convex feasibility; Proximity function;
Point-to-set operators; \\Split inverse problems; Bregman distance.
\medskip{}

\textbf{2020 Mathematics Subject Classification}: 65K15; 90C25; 47N10

\section{Introduction}

\label{Sec:1}

In this paper we define the \textit{multi-split multiple-sets implicit
convex feasibility problem} (MSMS-ICFP) and investigate it. The emphasis
is on the term ``implicit'' which means that some of the constraints
sets that describe the feasibility-seeking problem are not deterministically
fixed but depend on the unknown vector itself. Put differently, these
are sets that are defined via point-to-set operators.

The MSMS-ICFP studied here is an important instance of the \textit{split
inverse problem} (SIP) that deals with two different inverse problems
formulated in two different spaces that are connected with each other
by a bounded linear transformation. We explain below these notions
and \soumitra{refer the reader to earlier published research works
for further details}.

\subsection{General Background and Motivation}

\label{subsec:General-Background-and}

The {\em convex feasibility problem} (CFP) forms the foundation
of modeling many inverse problems in various fields of mathematics
and physical sciences (see, e.g., \cite{EREM1969,YCEN1982,DEUT1992}
and references therein). It consists of finding a common point of
a finite or (countably) infinite collection of nonempty, closed, and
convex subsets of a normed linear space. We consider here the finite
case in the Euclidean space. Let $\{C_{i}\}^{r}_{i=1}$ be a finite
collection of nonempty, closed, and convex subsets of $\mathbb{R}^{n}$.
The CFP is: 
\begin{align}
\mathrm{\mathrm{Find}\:a\:point\:}x^{*}\in\mathbb{R}^{n}\mathrm{\:such\:that\:}x^{*}\in\bigcap^{r}_{i=1}C_{i}.\tag{CFP}\label{CFP:Prob}
\end{align}

Over more than the past five decades, many researchers have extensively
explored the problem \eqref{CFP:Prob} in various normed spaces and
proposed algorithms to solve it (see, for instance, \cite{YCEN1982,HHBA1996,BUTN1990,GUBI1967,HANS1988}
and references therein). Many algorithms to solve the problem \eqref{CFP:Prob}
belong to the class of projection methods (see, for example, \cite{HHBA1996}).
However, projections onto nonempty, closed, and convex subsets of
the Euclidean space or a Hilbert space can be computationally expensive
or even impossible to compute explicitly. Therefore, researchers often
focus on convex sets involved in the problem \eqref{CFP:Prob} for
which projections are easy to compute. \soumitra{Recently, in \cite{TLDI2026},
the problem \eqref{CFP:Prob} has been studied in the nonconvex and
inconsistent settings, and they have proposed a cyclic relaxed Douglas-Rachford
algorithm for it}.

In general, projection algorithms are iterative methods that use projections
onto nonempty, closed, and convex sets. The methods rely on the premise
that when a family of sets is present in a problem then projections
onto the given individual sets are easier to perform than projections
onto other sets derived from them (such as intersections, image sets
under some transformation, etc).

Projection algorithms compute projections onto nonempty, closed, and
convex sets in various ways and, therefore, the performance mechanism
of each algorithm is distinct. However, they serve to solve a variety
of problems especially in optimization theory. They are capable of
handling large-size problems with dimensions beyond those that can
be handled by other methods available presently. Moreover, simultaneous
algorithms allow parallel computations, while sequential algorithms
process one step at a time and move to the next step after having
completed all the computations at the present step. We refer the reader
to \cite{YCEN1997,cegielski-book 2012} for more details on projection
algorithms.

Behind the iterative algorithm scenario to solve nonlinear problems,
the notion of a proximity (a.k.a. gap) function plays a crucial role
in guiding the process toward a solution. This notion helps to transform
various problems in nonlinear analysis into an optimization problem
(see, for instance, \cite{cegielski-book 2012,FUKU1992,KUBO2009}
and references therein). This reformulation encourages the use of
optimization techniques and algorithms to address problems in nonlinear
analysis. Also, a proximity function may be used as a stopping criterion
for an iterative algorithm or to find error bounds of an algorithm.

\soumitra{Early fundamental references for the problem \eqref{CFP:Prob}
are \cite{LMBR1965,LMBR1967}. It was revisited by \cite{YCEN2016}}
and generalized to handle point-to-set operators, introducing the
concept of {\em implicit convex feasibility problem} (ICFP). They
demonstrated its application in image denoising within a medical imaging
situation. Motivated by the above, we further generalize here the
problem (ICFP) to the {\em multi-split multiple-sets implicit convex
feasibility problem}, which is defined below.

\subsection{Problem Definition}

\label{subsec:Problem-Definition}

Let $\mathbb{R}^{n}$ and $\mathbb{R}^{m_{j}}$ for $j=1,2,\dots,t$
be Euclidean spaces. For each $i=1,2,\dots,r$, let $C_{i}:\mathbb{R}^{n}\rightarrow2^{\mathbb{R}^{n}}$
be a point-to-set operator such that for each $x\in\mathbb{R}^{n}$,
the set $C_{i}(x)$ is nonempty, closed, and convex. For each $j=1,2,\dots,t$
and $h=1,2,\dots,\ell_{j}$, let $Q^{j}_{h}:\mathbb{R}^{m_{j}}\rightarrow2^{\mathbb{R}^{m_{j}}}$
be a point-to-set operator such that for each $y\in\mathbb{R}^{m_{j}}$,
the set $Q^{j}_{h}(y)$ is nonempty, closed, and convex. Let $A_{j}:\mathbb{R}^{n}\rightarrow\mathbb{R}^{m_{j}}$
be a bounded linear operator for each $j=1,2,\dots,t$. With these
given data the {\em multi-split multiple-sets implicit convex feasibility
problem} (MSMS-ICFP) is defined as follows: find a point $x^{*}\in\mathbb{R}^{n}$
such that 
\begin{equation}
x^{*}\in\bigcap\limits^{r}_{i=1}C_{i}(x^{*})\text{ and }A_{j}x^{*}\in\bigcap\limits^{\ell_{j}}_{h=1}Q^{j}_{h}(A_{j}x^{*}),\;\text{for each }j=1,2,\dots,t.\tag{MSMS-ICFP}\label{MSMSICFP}
\end{equation}

In particular, if we set \soumitra{$t=r$}, $\mathbb{R}^{m_{j}}=\mathbb{R}^{n}$,
$A_{j}:=I$, the identity operator, \soumitra{$Q^{j}_{h}(x)=C_{j}(x)$,
for all $x\in\mathbb{R}^{n}$, $h=1,2,\dots,\ell_{j}$, $j=1,2,\dots,t$,
in} the above formulation, then the problem \eqref{MSMSICFP} reduces
to the classical {\em implicit convex feasibility problem} (ICFP)
of \cite{YCEN2016} which consists of finding a point $x^{*}\in\mathbb{R}^{n}$
such that 
\begin{equation}
x^{*}\in\bigcap^{r}_{i=1}C_{i}(x^{*}).\tag{ICFP}\label{ICFP}
\end{equation}

\soumitra{This problem was originated by \cite{MRGH2013} in the
context of the cooperative wireless sensor network positioning problem,
and was generalized in \cite{YCEN2016}.}

Note that the above problem \eqref{MSMSICFP} can be rewritten as
follows: find a point $x^{*}\in\mathbb{R}^{n}$ such that 
\begin{equation}
x^{*}\in\bigcap\limits^{r}_{i=1}\text{Fix}(C_{i})\text{ and }A_{j}x^{*}\in\bigcap\limits^{\ell_{j}}_{h=1}\text{Fix}(Q^{j}_{h}),\;\text{for each }j=1,2,\dots,t,\tag{MSMS-ICFP}\label{MSMSICFP2}
\end{equation}
where $\text{Fix}(\cdot)$ denotes the fixed-point set of the point-to-set
operator defined below.

\subsection{The Split Inverse Problem and Related Work}

\label{subsec:The-Split-Inverse}

Let $X$ and $Y$ be two vector spaces, and $A:X\rightarrow Y$ be
a bounded linear operator. In addition, two inverse problems are involved.
The first one is $\text{IP}_{1}$, formulated in $X$ and the second
one is $\text{IP}_{2}$, formulated in $Y$. With these given data,
the \textit{split inverse problem} (SIP), introduced in \cite[Section 2]{YCEN22012}
is defined as follows: 
\[
\begin{array}{r@{\;}l}
\text{Find a point } & x^{*}\in X\text{ that solves}\text{ IP}_{1}\\
 & \text{such that}\\
\text{the point }y^{*} & =Ax^{*}\in Y\text{ solves}\text{ IP}_{2}.\tag{SIP}
\end{array}
\]

SIPs are quite general because they enable one to obtain various split
problems by making different choices of $\text{IP}_{1}$ and $\text{IP}_{2}$.
Due to their significant applications in, e.g., the intensity-modulated
radiation therapy treatment problem \cite{YCEN2005,FJAR2023} and
in economic theory \cite{JLI2019}, various types of SIPs have recently
attracted considerable attention. One notable example is the split
variational inequality problem (SVIP) \cite{YCEN22012}. For comprehensive
discussions of different types of split problems, see \cite{SDEY2024,SDEY22024,SDEY2023,GLOP2012,AMOU2011,AMOU2014,YSHE2015,SREI22022}
and references therein. The problem \eqref{MSMSICFP} studied in this
paper is an important instance of a SIP.

Current interest and activity in the field of split problems is further
evidenced by several recent relevant papers, including \cite{MRGH2013,GHTA2024,PPAT2024,ZHU2022,SSUA2022}.

\subsection{Our Contribution}

\label{subsec:Our-Contribution} \soumitra{The problem \eqref{MSMSICFP}
provides a unified structure for feasibility problems involving implicit
constraint sets in the domain space, together with multiple split
implicit constraint sets in possibly different image spaces. Our approach
combines the notions of proximity functions, projection methods,
and Bregman distances. The main contributions to this work are as
follows.}

\soumitra{
\begin{enumerate}
\item We introduce a new feasibility-seeking problem called the multi-split
multiple-sets implicit convex feasibility problem (MSMS-ICFP). In
this problem, an unknown point is required to satisfy a finite family
of implicit constraint sets in the domain space, while its images
under bounded linear operators are required to belong to prescribed
families of implicit constraint sets in the relevant image spaces.
The problem \eqref{MSMSICFP} generalizes the implicit convex feasibility
problem (ICFP). 
\item We employ a proximity function for the problem \eqref{MSMSICFP} and
establish its continuity under appropriate assumptions on the point-to-set
operators. 
\item For a special class of point-to-set operators involving translations,
scalings, and unitary transformations of fixed nonempty, closed, and
convex sets, we obtain an explicit formulation of the proximity function.
In this setting, we show that the proximity function is convex, continuously
differentiable, and its gradient is Lipschitz continuous. 
\item We propose a simultaneous projection gradient algorithm (SPGA) for
solving the problem \eqref{MSMSICFP} and prove that the sequence
generated by it converges in norm to a solution of the problem \eqref{MSMSICFP},
whenever one exists. 
\item We introduce a generalized proximity function for the problem \eqref{MSMSICFP}
by replacing Euclidean distances and orthogonal projections by Bregman
distances and Bregman projections, respectively. Since the domain
and image constraints are defined in different Euclidean spaces, we
consider a Bregman function in the domain space and other Bregman
functions in the corresponding image spaces. 
\item \soumitra{The split feasibility problem with multiple output sets is a special case of the problem \eqref{MSMSICFP} with fixed constraint sets.
We derive the differentiability and Lipschitz continuity of the gradient of the generalized proximity function for the problem under suitable assumptions. One of these is the Lipschitz continuity assumptions on the Hessian-weighted Bregman projection residual operators. We show that this assumption is more appropriate than imposing global boundedness of the projection residuals.
In particular, we give an example to show that in the Euclidean setting for arbitrary nonempty, closed, and convex constraint sets the proposed assumptions hold, even when the associated residuals are unbounded.} 
\item Based on the generalized proximity function, we establish a generalized
simultaneous projection gradient algorithm (GSPGA) for the split feasibility
problem with multiple output sets and prove that the sequence generated
by the algorithm converges in norm to a solution of the split feasibility
problem with multiple output sets, whenever one exists. 
\item We establish various special cases of the problem \eqref{MSMSICFP}
such as the split common fixed point problem, the fixed point problem,
the multiple-sets split feasibility problem, the split feasibility
problem, the convex feasibility problem, and the split common minimization
problem. 
\end{enumerate}
}


The paper is organized as follows. In Section \ref{Sec:2} we review
some existing definitions and results. In Section \ref{Sec:3} we
introduce a proximity function for the problem \eqref{MSMSICFP} and
examine its properties for general point-to-set operators. We also
discuss the properties of the proximity function for a special class
of point-to-set operators. Additionally, we present a new iterative
algorithm to solve the problem \eqref{MSMSICFP} and prove that the
sequence generated by the algorithm converges to a solution, if one
exists. We also define a generalized proximity function for the MSMS-ICFP,
and based on it, present a generalized simultaneous projection gradient
algorithm (GSPGA) for the split feasibility problem with multiple
output sets, by examining various related results. In Section \ref{other:special:MSMSICFP}
we discuss various  special cases of the problem \eqref{MSMSICFP}.
In Section \ref{Sec:conclusion} we draw a conclusion of our work.

\section{Preliminaries}

\label{Sec:2}

In this section, we recall some basic definitions and results that
will be used later in the paper. \soumitra{Throughout this paper,
$I$ denotes the identity operator on the Euclidean space whose dimension
is determined by the context. Additionally, $\|\cdot\|$ denotes the
Euclidean norm induced by the inner product $\left\langle \cdot,\cdot\right\rangle $
in the Euclidean space.}

\begin{definition}\cite[p. 28]{cegielski-book 2012}\label{Def:1}
A function $f:\mathbb{R}^{n}\rightarrow\mathbb{R}$ is said to be a
proximity function for the problem \eqref{MSMSICFP} if it satisfies
the following properties:
\begin{list}{}{%
    \setlength{\leftmargin}{3.5em}%
    \setlength{\labelwidth}{2.5em}%
    \setlength{\labelsep}{0.5em}%
    \setlength{\itemindent}{0pt}%
}
\item[$(P_{1})$] $f(x)\geq0,\;\text{for all }x\in\mathbb{R}^{n}$,\label{P1} 
\item[$(P_{2})$] $f(x^{*})=0,\;x^{*}\in\mathbb{R}^{n}$ if and only if $x^{*}$ is
a solution of the problem \eqref{MSMSICFP}.\label{P2} 
\end{list}
\end{definition}

\begin{definition}\cite{HHBA2011} \soumitra{Let $C$ be a nonempty
subset of $\mathbb{R}^{n}$. An operator $T:C\rightarrow\mathbb{R}^{n}$
is said to be Lipschitz continuous with constant $L>0$ if 
\begin{equation}
\|T(x)-T(y)\|\leq L\|x-y\|,\;\mathrm{for\;all\;}x,y\in C.
\end{equation}
} \end{definition}

\begin{lemma}\cite[Theorem 2.4]{SKAR1969}\label{lemma:strongconvex}
\soumitra{A differentiable function $f:\mathbb{R}^{n}\rightarrow\mathbb{R}$
is $\sigma$-strongly convex for some $\sigma>0$ if and only if its
gradient $\nabla f$ is $\sigma$-strongly monotone, that is, 
\begin{eqnarray}
\left\langle \nabla f(y)-\nabla f(x),y-x\right\rangle \geq\sigma\|y-x\|^{2},\;\text{\;for all}\;x,y\in\mathbb{R}^{n}.
\end{eqnarray}
} \end{lemma}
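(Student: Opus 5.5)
I will prove the equivalence for a differentiable $f:\mathbb{R}^{n}\rightarrow\mathbb{R}$, taking the definition of $\sigma$-strong convexity to be that $g(x):=f(x)-\frac{\sigma}{2}\|x\|^{2}$ is convex. Equivalently, for all $x,y$ and $\lambda\in[0,1]$,
\[
f(\lambda x+(1-\lambda)y)\leq\lambda f(x)+(1-\lambda)f(y)-\tfrac{\sigma}{2}\lambda(1-\lambda)\|x-y\|^{2}.
\]
The strategy is to reduce everything to the classical fact that a differentiable function is convex if and only if its gradient is monotone. We have $\nabla g(x)=\nabla f(x)-\sigma x$, and hence
\[
\left\langle \nabla g(y)-\nabla g(x),y-x\right\rangle =\left\langle \nabla f(y)-\nabla f(x),y-x\right\rangle -\sigma\|y-x\|^{2}.
\]
So $\sigma$-strong monotonicity of $\nabla f$ is exactly monotonicity of $\nabla g$, and both directions follow once the classical fact is established for $g$.

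For the direction convex $\Rightarrow$ monotone, I use the first-order characterization of convexity. Take $\lambda\to0^{+}$ in
\[
g(x+\lambda(y-x))\leq g(x)+\lambda(g(y)-g(x)),
\]
which gives $g(y)\geq g(x)+\left\langle \nabla g(x),y-x\right\rangle$. Writing the same inequality with $x$ and $y$ swapped and adding the two yields $\left\langle \nabla g(y)-\nabla g(x),y-x\right\rangle \geq0$.

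For the direction monotone $\Rightarrow$ convex, I restrict to lines. Fix $x,y$ and set $\varphi(s):=g(x+s(y-x))$ for $s\in[0,1]$. Then $\varphi$ is differentiable with $\varphi'(s)=\left\langle \nabla g(x+s(y-x)),y-x\right\rangle$. For $s>u$, monotonicity applied to the points $x+s(y-x)$ and $x+u(y-x)$ gives $(s-u)(\varphi'(s)-\varphi'(u))\geq0$, so $\varphi'$ is nondecreasing. A differentiable function of one real variable with nondecreasing derivative is convex, by the mean value theorem. Specifically, for $u<w<s$ the mean value theorem produces slopes $\varphi'(\xi_{1})\leq\varphi'(\xi_{2})$ with $\xi_{1}\in(u,w)$ and $\xi_{2}\in(w,s)$, which is the three-slope inequality. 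Hence $\varphi$ is convex on $[0,1]$. Convexity of $g$ along every segment is exactly convexity of $g$.

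The main point requiring care is this last step. Only differentiability is assumed, not $C^{1}$, so I cannot integrate $\varphi'$. This is why I use the mean value theorem rather than the identity $\varphi(1)-\varphi(0)=\int_{0}^{1}\varphi'$. Finally, I translate the convexity of $g$ back into the displayed inequality for $f$ using the identity
\[
\lambda\|x\|^{2}+(1-\lambda)\|y\|^{2}-\|\lambda x+(1-\lambda)y\|^{2}=\lambda(1-\lambda)\|x-y\|^{2}.
\]
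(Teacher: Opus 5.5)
Your proof is correct, and complete under the hypothesis as stated (mere differentiability). The one-sided difference quotient gives the first-order inequality for $g=f-\frac{\sigma}{2}\|\cdot\|^{2}$. The mean value theorem along segments then gives convexity from monotonicity of $\nabla g$, without needing $\nabla f$ to be continuous.

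The paper itself offers no argument here: the lemma is imported from its cited source (Theorem~2.4 there). So your reduction to the classical equivalence ``convex $\iff$ monotone gradient'' is a self-contained replacement for that citation rather than an alternative to an in-paper proof.

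One thing your version adds is that it fixes the normalization of strong convexity, which the paper never states. The same $\sigma$ appears on both sides only with the factor $\frac{\sigma}{2}$ in the definition. With a penalty $\sigma\lambda(1-\lambda)\|x-y\|^{2}$ instead, the monotonicity modulus becomes $2\sigma$, and the converse with the same $\sigma$ fails, e.g.\ for $f=\frac{\sigma}{2}\|\cdot\|^{2}$.

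Your convention matches the one the paper tacitly uses later, for instance when it deduces $D_f(y,x)\geq\|y-x\|^{2}/60$ from strong convexity with constant $1/30$. Only your first direction (strong convexity $\Rightarrow$ strong monotonicity) is actually used downstream, in the proof of Lemma~\ref{Breg:lem:BregProj}.
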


\begin{definition} Let $C$ be a nonempty subset of $\mathbb{R}^{n}$
and let $S:C\rightarrow2^{C}$ be a point-to-set operator. Then the
set of all fixed points of $S$ is denoted by $\text{Fix}(S)$ and
defined as follows: 
\begin{equation}
\text{Fix}(S):=\{x\in C\;\vert\;x\in S(x)\}.
\end{equation}
\end{definition}

\begin{definition}\cite[pp. 50--51]{HHBA2011}\label{Def:2} \soumitra{Let
$C$ be a nonempty subset of $\mathbb{R}^{n}$. The distance of a point $x\in\mathbb{R}^n$ to a set $C$ is denoted by dist$(x,C)$, i.e., 
\begin{eqnarray}
    \text{dist}(x,C)=\inf_{y\in C}\|x-y\|
\end{eqnarray}
and the set of points at which distance is attended is called the nearest point
projection of $x$ onto $C$ is defined by 
\begin{equation}
P_{C}(x):=\arg\min_{y\in C}\|x-y\|.
\end{equation}}
\end{definition}

\begin{remark}\label{rmk:proj} 
\begin{enumerate}
\item \cite[Theorem 3.16 (Projection theorem)]{HHBA2011} If $C$ is a nonempty,
closed, and convex subset of $\mathbb{R}^{n}$, then for each point
$x\in\mathbb{R}^{n}$, the projection onto $C$ exists and is unique.
Furthermore, for every $x,z\in\mathbb{R}^{n}$ 
\begin{equation}
z=P_{C}(x)\Leftrightarrow\left\langle x-z,y-z\right\rangle \leq0,\;\mathrm{for\;all\;}y\in C.\label{proj:inq1}
\end{equation}
\item Using the characterization of projection operator mentioned in \eqref{proj:inq1},
one can verify that it is nonexpansive. That is, 
\begin{equation}
\|P_{C}(x)-P_{C}(y)\|\leq\|x-y\|,\;\mathrm{for\;all\;}x,y\in\mathbb{R}^{n}.\label{proj:nonexpansive}
\end{equation}
\item From the definition of the projection operator, it is clear that $\text{Fix}(P_{C})=\{x\vert x=P_C(x)\}=\{x\vert x\in C\}=C$.
\end{enumerate}
\end{remark}

\begin{lemma}\cite[Proof of Lemma 3.2]{YNES2011}\label{lemma:translation:projection}
\soumitra{Let $C$ be a nonempty, closed, and convex subset of $\mathbb{R}^{n}$,
and let $\nu:\mathbb{R}^{n}\rightarrow\mathbb{R}^{n}$ be an operator.
Let $\varOmega:\mathbb{R}^{n}\rightarrow 2^{\mathbb{R}^{n}}$ be a point-to-set
operator such that $\varOmega(x)=\nu(x)+C=\{\nu(x)+c\vert c\in C\}$ for every $x\in\mathbb{R}^{n}$.
Then for every $x,z\in\mathbb{R}^{n}$ 
\begin{eqnarray}
P_{\varOmega(x)}(z)=\nu(x)+P_{C}(z-\nu(x)).
\end{eqnarray}
} \end{lemma}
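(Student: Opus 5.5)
Fix $x,z\in\mathbb{R}^{n}$ and write $a:=\nu(x)$, so that $\varOmega(x)=a+C$ is a fixed translate of $C$. The set $a+C$ is nonempty, closed, and convex, since translation is an isometry of $\mathbb{R}^{n}$ that preserves convexity and closedness. By Remark \ref{rmk:proj}(1), $P_{\varOmega(x)}(z)$ therefore exists and is unique. The value of $\nu$ at $x$ is the only thing that enters, so the operator $\nu$ plays no further role.

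The plan is to check that the candidate point $w:=a+P_{C}(z-a)$ satisfies the variational characterization \eqref{proj:inq1} for the set $a+C$. First, $w\in a+C$ because $P_{C}(z-a)\in C$. Next, take an arbitrary $y\in a+C$ and write $y=a+c$ with $c\in C$. Then
\[
\left\langle z-w,\,y-w\right\rangle=\left\langle (z-a)-P_{C}(z-a),\,c-P_{C}(z-a)\right\rangle\le0,
\]
where the inequality is \eqref{proj:inq1} applied to the set $C$ and the point $z-a$. Since this holds for every $y\in a+C$, the characterization \eqref{proj:inq1} for $a+C$ gives $w=P_{a+C}(z)=P_{\varOmega(x)}(z)$.

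As a cross-check, one can argue directly from the definition. The map $c\mapsto a+c$ is a bijection from $C$ onto $a+C$, and $\|z-(a+c)\|=\|(z-a)-c\|$. Hence minimizing $\|z-y\|$ over $y\in a+C$ is the same as minimizing $\|(z-a)-c\|$ over $c\in C$, and the minimizers correspond under the shift by $a$.

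There is no real obstacle here. The only points needing care are that $\varOmega(x)$ is closed and convex, so that Remark \ref{rmk:proj} applies, and that the two inner products in the displayed computation are correctly identified.
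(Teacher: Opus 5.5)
Your proof is correct and complete. The point $w=\nu(x)+P_C(z-\nu(x))$ lies in $\nu(x)+C$. Writing $y=\nu(x)+c$ turns the variational inequality for $\nu(x)+C$ at $z$ into the one for $C$ at $z-\nu(x)$, so the uniqueness in Remark~\ref{rmk:proj}(1) gives the identity.

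The paper gives no argument of its own here; it only cites the proof of Lemma 3.2 in the referenced work. There is therefore no in-paper route to compare against, and your argument supplies a self-contained proof. Your argmin-shift cross-check is, by itself, an even more elementary proof. It shows that the minimizer sets correspond under translation for any nonempty $C$. Closedness and convexity are then needed only to make $P_C$ well defined and single-valued, and no property of $\nu$ is used, which matches the absence of assumptions on $\nu$ in the statement.
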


\begin{lemma}\label{Lemma4}\cite[Lemma 2.1]{YCEN2016} Let $C$
be a nonempty, closed, and convex subset of $\mathbb{R}^{n}$. Then
for a given matrix $A$, a positive diagonal matrix $D=\alpha I$,
$\alpha>0$, and a fixed unitary matrix $U$ (that is, $UU^{T}=U^{T}U=I$),
the following is true 
\begin{eqnarray}
P_{DU(C)+Ax}(y) &  & =DUP_{C}(D^{-1}U^{T}(y-Ax))+Ax\label{ineq:PDUC}\nonumber \\
 &  & =\alpha UP_{C}\Big(\frac{1}{\alpha}U^{T}(y-Ax)\Big)+Ax,\;\mathrm{for\;all\;}x,y\in\mathbb{R}^{n},
\end{eqnarray}
where the superscript $T$ stands for the transpose and $U(C):=\{Ux\mid x\in C\}.$
\end{lemma}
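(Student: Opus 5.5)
The approach is to verify the claimed formula directly through the variational characterization \eqref{proj:inq1} of Remark \ref{rmk:proj}. Set $S:=DU(C)+Ax=\{\alpha Uc+Ax\mid c\in C\}$. First we check that $S$ is nonempty, closed, and convex, so that $P_S(y)$ exists and is unique. Convexity and nonemptiness hold because $c\mapsto\alpha Uc+Ax$ is an affine map and $C$ has both properties. Closedness holds because this map is a bijective affine homeomorphism of $\mathbb{R}^{n}$, with inverse $z\mapsto\alpha^{-1}U^{T}(z-Ax)$, since $U$ is unitary and $\alpha>0$. It then suffices to show that the candidate
\[
z:=\alpha UP_{C}\Big(\tfrac{1}{\alpha}U^{T}(y-Ax)\Big)+Ax
\]
lies in $S$ and satisfies $\langle y-z,s-z\rangle\le 0$ for all $s\in S$. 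The two displayed expressions in \eqref{ineq:PDUC} agree trivially, because $D=\alpha I$ and $D^{-1}=\alpha^{-1}I$.

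For the key step, write $w:=\alpha^{-1}U^{T}(y-Ax)$ and $p:=P_C(w)\in C$, so $z=\alpha Up+Ax\in S$. Any $s\in S$ has the form $s=\alpha Uc+Ax$ with $c\in C$. Then $s-z=\alpha U(c-p)$, and since $UU^{T}=I$, we have $y-z=y-Ax-\alpha Up=\alpha U(w-p)$. Hence
\[
\langle y-z,s-z\rangle=\alpha^{2}\langle U(w-p),U(c-p)\rangle=\alpha^{2}\langle w-p,c-p\rangle\le 0,
\]
using $U^{T}U=I$ to cancel $U$ and then \eqref{proj:inq1} applied to $p=P_C(w)$. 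By the converse direction of \eqref{proj:inq1}, $z=P_S(y)$.

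As a cross-check, one could instead decompose the argument: Lemma \ref{lemma:translation:projection} with $\nu\equiv Ax$ removes the translation, giving $P_{S}(y)=Ax+P_{\alpha U(C)}(y-Ax)$. It would then remain to show $P_{\alpha U(C)}(v)=\alpha UP_C(\alpha^{-1}U^{T}v)$, which follows from the same inner-product computation. The main point requiring care is the use of $U^{T}U=I$ to preserve inner products and of $\alpha>0$ so that the sign of the inequality is preserved. I do not expect a genuine obstacle here; the only subtlety is confirming that $S$ is closed, so that the projection theorem applies.
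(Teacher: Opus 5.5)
Your proof is correct and complete. The candidate $z=\alpha UP_C(w)+Ax$ lies in $S$, and the two identities $y-z=\alpha U(w-p)$ (using $UU^{T}=I$) and $s-z=\alpha U(c-p)$ reduce $\langle y-z,s-z\rangle$ to $\alpha^{2}\langle w-p,c-p\rangle\le 0$ (using $U^{T}U=I$). You also verify $z\in S$ explicitly. That check is needed, because the paper's version of \eqref{proj:inq1} omits the membership requirement.

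The paper gives no proof of this lemma; it only quotes \cite[Lemma 2.1]{YCEN2016}, so there is nothing in the text to compare your argument with.

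A slightly more direct alternative works with the defining minimization instead of the variational inequality. Writing $s=\alpha Uc+Ax$, you have $\|y-s\|=\alpha\|w-c\|$, since $U$ is an isometry. Hence the bijection $c\mapsto\alpha Uc+Ax$ carries the unique minimizer $P_C(w)$ of $\|w-\cdot\|$ over $C$ onto the unique minimizer of $\|y-\cdot\|$ over $S$. This route transfers existence and uniqueness automatically and needs no closedness check for $S$.

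Your route needs closedness only to invoke Remark \ref{rmk:proj}(1) as stated. The sufficiency half of the inequality characterization actually holds for an arbitrary set: if $z\in S$ and $\langle y-z,s-z\rangle\le 0$ for all $s\in S$, then $\|y-s\|^{2}\ge\|y-z\|^{2}+\|s-z\|^{2}$. Either way, the argument is sound. Your cross-check via Lemma \ref{lemma:translation:projection} with $\nu\equiv Ax$ is also valid.
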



\soumitra{ We deduce the following corollary from \cite[Theorem 3.4]{PLCO2005}.
Note that $\arg\min_{x\in\mathbb{R}^{n}}f(x)$ is the set of all minimizers
of $f$.}

\soumitra{\begin{corollary}\label{Co1:Grad:thm} Let $f:\mathbb{R}^{n}\rightarrow\mathbb{R}$
be a convex function with Lipschitz continuous gradient with constant
$L>0$. Assume that the solution set $G$ of the problem $\min_{x\in\mathbb{R}^{n}}f(x)$
is nonempty. Let $x^{0}\in\mathbb{R}^{n}$ be arbitrary and $\lambda\in(0,2/L)$.
Then the sequence $\{x^{k}\}_{k=0}^\infty$ generated by the gradient
method $x^{k+1}:=x^{k}-\lambda\nabla f(x^{k})$, converges in norm
to a point in $G$. \end{corollary}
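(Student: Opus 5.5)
The plan is to derive Corollary \ref{Co1:Grad:thm} from the standard convergence theory of forward--backward splitting in \cite[Theorem 3.4]{PLCO2005}, taking the nonsmooth part to be zero. Alternatively, one can give a short self-contained Fejér-monotonicity argument. I describe the self-contained route, since it makes explicit why $\lambda\in(0,2/L)$ suffices.

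\emph{Step 1 (cocoercivity).} For a convex function $f$ with $L$-Lipschitz gradient, the Baillon--Haddad theorem gives that $\nabla f$ is $\frac{1}{L}$-cocoercive:
\[
\langle \nabla f(x)-\nabla f(y),x-y\rangle\geq \tfrac{1}{L}\|\nabla f(x)-\nabla f(y)\|^{2}.
\]
This is the key structural input; I will cite it rather than reprove it. From it, the operator $T:=I-\lambda\nabla f$ satisfies, for every $x$ and every $z\in G$ (where $\nabla f(z)=0$ by convexity and differentiability),
\[
\|Tx-z\|^{2}=\|x-z\|^{2}-2\lambda\langle\nabla f(x),x-z\rangle+\lambda^{2}\|\nabla f(x)\|^{2}\leq\|x-z\|^{2}-\lambda\Big(\tfrac{2}{L}-\lambda\Big)\|\nabla f(x)\|^{2}.
\]

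\emph{Step 2 (Fejér monotonicity).} Apply the Step 1 estimate with $x=x^{k}$. Since $\lambda(2/L-\lambda)>0$, the sequence $\{\|x^{k}-z\|\}$ is nonincreasing for every $z\in G$. Hence $\{x^{k}\}$ is bounded. Summing the inequalities telescopically gives $\sum_{k}\|\nabla f(x^{k})\|^{2}<\infty$, so $\nabla f(x^{k})\to0$.

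\emph{Step 3 (identifying limits).} Take any cluster point $\bar x$ of $\{x^{k}\}$. By continuity of $\nabla f$ we get $\nabla f(\bar x)=0$. By convexity, $\bar x$ is then a minimizer, so $\bar x\in G$. Because $\|x^{k}-\bar x\|$ is nonincreasing and a subsequence of it tends to $0$, the whole sequence converges to $\bar x$. In $\mathbb{R}^{n}$ this is norm convergence, which is the assertion.

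\emph{Main obstacle.} The only nontrivial ingredient is Step 1, the cocoercivity of $\nabla f$ (Baillon--Haddad). Without it, Lipschitz continuity of the gradient alone would only justify $\lambda<1/L$-type arguments via the descent lemma. I will either quote it directly or note that it is exactly what \cite[Theorem 3.4]{PLCO2005} uses with the proximal term $g\equiv0$ and constant step $\gamma_{k}=\lambda\in(0,2/L)$, with relaxation parameters equal to $1$.
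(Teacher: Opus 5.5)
Your proof is correct, but it differs from the paper's, which gives no argument of its own. The paper specializes the forward--backward splitting theorem \cite[Theorem 3.4(i)]{PLCO2005} with $f_1=0$ (so the proximal step is the identity), $f_2=f$, $\beta=1/L$, constant step $\gamma_k=\lambda$, relaxation $\lambda_k=1$ and error terms $a_k=b_k=0$. It also points to \cite[Corollary 27.10]{HHBA2011}.

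Your Fejér argument is essentially what that theorem does internally in this special case. Baillon--Haddad gives $\|x-\lambda\nabla f(x)-z\|^2\le\|x-z\|^2-\lambda(2/L-\lambda)\|\nabla f(x)\|^2$ for $z\in G$, which is the averagedness of $I-\lambda\nabla f$ underlying the cited result. In $\mathbb{R}^n$ you can then finish with Bolzano--Weierstrass instead of the weak-cluster-point (Opial-type) reasoning needed in Hilbert space. The citation buys brevity and covers relaxations and summable errors for free. Your route is self-contained, shows exactly where $\lambda<2/L$ enters, and yields the explicit bound $\sum_k\|\nabla f(x^k)\|^2\le\|x^0-z\|^2/(\lambda(2/L-\lambda))$.

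One correction to your closing aside: cocoercivity is not needed to reach the full range $\lambda\in(0,2/L)$. The descent lemma gives $f(x^{k+1})\le f(x^k)-\lambda(1-L\lambda/2)\|\nabla f(x^k)\|^2$, so $\sum_k\|\nabla f(x^k)\|^2<\infty$ for every such $\lambda$. Together with the convexity inequality $\langle\nabla f(x^k),x^k-z\rangle\ge f(x^k)-f(z)\ge0$, this gives $\|x^{k+1}-z\|^2\le\|x^k-z\|^2+\lambda^2\|\nabla f(x^k)\|^2$, i.e., quasi-Fejér monotonicity with summable errors. Your Step 3 then goes through with ``nonincreasing'' replaced by ``convergent''. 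So the descent-lemma approach does not restrict you to $\lambda<1/L$.
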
}

\soumitra{ \begin{proof} We know that $prox_{\lambda f}(x)=\arg\min_{y\in\mathbb{R}^{n}}\{f(y)+(1/2\lambda)\|x-y\|^{2}\}$
for $x\in\mathbb{R}^{n}$. If $f=0$ then $prox_{\lambda f}(x)=x$
for $x\in\mathbb{R}^{n}$. Under our assumptions, the proof directly
follows from \cite[Theorem 3.4 (i)]{PLCO2005} by substituting $\mathscr{H}=\mathbb{R}^{n}$,
$\lambda_{k}=1,\gamma_{k}=\lambda,a_{k}=0,b_{k}=0$ for all $k\geq0$,
$\beta=1/L$, $f_{1}=0$, $f_{2}=f$, and using the fact that $prox_{\lambda f_{1}}(x)=x$
for $x\in\mathbb{R}^{n}$. \end{proof}}

\begin{remark} \soumitra{It is worth noting that Corollary \ref{Co1:Grad:thm}
also can be deduced from \cite[Corollary 27.10]{HHBA2011} under an
appropriate choice of parameters in the Euclidean space setting.}
\end{remark}

\soumitra{It is well-known that for a given matrix $A\in\mathbb{R}^{m\times n}$,
its $2$-norm is 
\begin{eqnarray}
\|A\|_{2}=\sup\{x^{T}Ay\vert x\in\mathbb{R}^{m},y\in\mathbb{R}^{n}\text{ with }\|x\|=1=\|y\|\}.
\end{eqnarray}
Furthermore, it satisfies 
\begin{eqnarray}
\|A^{T}A\|_{2}=\|AA^{T}\|_{2}=\|A\|^{2}_{2}\;\;\text{and}\;\;\|A\|_{2}=\|A^{T}\|_{2}.
\end{eqnarray}
}


\section{Main Results: Proximity Functions and Algorithms}

\label{Sec:3}

\subsection{The Proximity Function and its Characterization}

\label{subsec:1}

Let $C_{i}:\mathbb{R}^{n}\rightarrow2^{\mathbb{R}^{n}}$ be a point-to-set
operator for $i=1,2,\dots,r$. Let, for each $x\in\mathbb{R}^{n}$,
the set $C_{i}(x)$ be nonempty, closed, and convex. For each $j=1,2,\dots,t$
and $h=1,2,\dots,\ell_{j}$, let $Q^{j}_{h}:\mathbb{R}^{m_{j}}\rightarrow2^{\mathbb{R}^{m_{j}}}$
be a finite family of point-to-set operators. Each set $Q^{j}_{h}(y)$
is nonempty, closed, and convex for every $y\in\mathbb{R}^{m_{j}}$.
Let $A_{j}:\mathbb{R}^{n}\rightarrow\mathbb{R}^{m_{j}}$ be a bounded
linear operator for each $j=1,2,\dots,t$.

Define a function $g:\mathbb{R}^{n}\rightarrow\mathbb{R}$ by 
\begin{equation}
g(x):=\frac{1}{2}\sum^{r}_{i=1}\alpha_{i}\|x-P_{C_{i}(x)}(x)\|^{2}+\frac{1}{2}\sum^{t}_{j=1}\sum^{\ell_{j}}_{h=1}\beta^{j}_{h}\|A_{j}x-P_{Q^{j}_{h}(A_{j}x)}(A_{j}x)\|^{2},\;x\in\mathbb{R}^{n},\label{Zero=00003D00003D00003D00003D000020function}
\end{equation}
where $\alpha_{i}$ and $\beta^{j}_{h}$ are positive constants for
all $h,i,j$.

From Remark \ref{rmk:proj}(1), it is evident that the function $g$
is well-defined and is a real-valued function on $\mathbb{R}^{n}$.

\begin{proposition}\label{proposition1} A point $x^{*}\in\mathbb{R}^{n}$
is a solution of the problem \eqref{MSMSICFP} if and only if it solves
the equation $g(x)=0.$ \end{proposition}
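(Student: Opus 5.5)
The plan is a direct two-direction argument. It rests only on the nonnegativity of the terms of $g$ and on the characterization of projections in Remark \ref{rmk:proj}. First record that every summand in \eqref{Zero=00003D00003D00003D00003D000020function} is nonnegative, since $\alpha_{i}>0$, $\beta^{j}_{h}>0$ and squared norms are nonnegative. Hence $g(x)=0$ holds if and only if every individual term vanishes, that is,
\[
\|x-P_{C_{i}(x)}(x)\|=0\ \text{ for all } i, \qquad \|A_{j}x-P_{Q^{j}_{h}(A_{j}x)}(A_{j}x)\|=0\ \text{ for all } j,h .
\]

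The key pointwise fact is the following. For a nonempty, closed, and convex set $K$ and a point $z$, we have $z=P_{K}(z)$ if and only if $z\in K$. This is Remark \ref{rmk:proj}(3), i.e. $\text{Fix}(P_{K})=K$. To be self-contained, argue both directions. If $z\in K$, then $\|z-z\|=0$ is the minimum, and by the uniqueness in Remark \ref{rmk:proj}(1) we get $P_{K}(z)=z$. Conversely, $P_{K}(z)\in K$, so $z=P_{K}(z)$ forces $z\in K$.

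Apply this fact with the set frozen at the current point. For fixed $x$, take $K=C_{i}(x)$ and $z=x$. For each $j$, take $K=Q^{j}_{h}(A_{j}x)$ and $z=A_{j}x$. These sets are nonempty, closed, and convex by hypothesis, so the remark applies even though the set depends on $x$. The only subtlety, which I expect to be the main point to state carefully rather than a real obstacle, is exactly this: the implicit dependence is harmless because $x$ is fixed while the projection is evaluated.

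Combining the two observations, $g(x^{*})=0$ holds if and only if $x^{*}\in C_{i}(x^{*})$ for all $i$ and $A_{j}x^{*}\in Q^{j}_{h}(A_{j}x^{*})$ for all $j,h$. This is precisely the statement that $x^{*}$ solves \eqref{MSMSICFP}. As a consequence, $g$ satisfies $(P_{1})$ and $(P_{2})$ of Definition \ref{Def:1}, so it is a proximity function.
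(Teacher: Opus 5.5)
Your proposal is correct and takes essentially the same approach as the paper. Because the weighted squared residuals are nonnegative, $g(x^{*})=0$ reduces to the vanishing of each residual, and Remark~\ref{rmk:proj}(3), applied to the sets frozen at $x^{*}$ (respectively $A_{j}x^{*}$), turns this into the membership conditions; your explicit justification of $\text{Fix}(P_{K})=K$ and of why the implicit dependence is harmless is slightly more careful than the paper's, but it is the same argument.
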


\begin{proof} Let $x^{*}\in\mathbb{R}^{n}$ be a solution of the
problem \eqref{MSMSICFP}. Then using the projection characterization
\soumitra{(Remark \ref{rmk:proj}(3))} it is straightforward to
verify that 
\begin{equation}
x^{*}=P_{C_{i}(x^{*})}(x^{*})\text{ and }A_{j}x^{*}=P_{Q^{j}_{h}(A_{j}x^{*})}(A_{j}x^{*}),\;\mathrm{for\;all\;}h,i,j.
\end{equation}
This implies that $g(x^{*})=0$.

Conversely, suppose $x^{*}\in\mathbb{R}^{n}$ solves the equation
$g(x)=0$. This implies that 
\begin{equation}
\frac{1}{2}\sum^{r}_{i=1}\alpha_{i}\|x^{*}-P_{C_{i}(x^{*})}(x^{*})\|^{2}+\frac{1}{2}\sum^{t}_{j=1}\sum^{\ell_{j}}_{h=1}\beta^{j}_{h}\|A_{j}x^{*}-P_{Q^{j}_{h}(A_{j}x^{*})}(A_{j}x^{*})\|^{2}=0\label{Zero=00003D00003D00003D00003D000020function1}
\end{equation}
yielding 
\begin{equation}
\frac{1}{2}\sum^{r}_{i=1}\alpha_{i}\|x^*-P_{C_{i}(x^{*})}(x^{*})\|^{2}=0\text{ and }\frac{1}{2}\sum^{t}_{j=1}\sum^{\ell_{j}}_{h=1}\beta^{j}_{h}\|A_{j}x^{*}-P_{Q^{j}_{h}(A_{j}x^{*})}(A_{j}x^{*})\|^{2}=0.\label{Zero=00003D00003D00003D00003D000020function2}
\end{equation}
Consequently, \soumitra{
\begin{eqnarray}
\begin{aligned} & \alpha_{i}\|x^{*}-P_{C_{i}(x^{*})}(x^{*})\|^{2}=0,\;\mathrm{for\;all\;}i=1,2,\dots,r,\;\text{and}\\
\  & \sum^{\ell_{j}}_{h=1}\beta^{j}_{h}\|A_{j}x^{*}-P_{Q^{j}_{h}(A_{j}x^{*})}(A_{j}x^{*})\|^{2}=0,\;\mathrm{for\;all\;}j=1,2,\dots,t.
\end{aligned}
\label{Zero=00003D00003D00003D00003D000020function3}
\end{eqnarray}
}

Therefore, 
\begin{equation}
x^{*}\in\bigcap^{r}_{i=1}C_{i}(x^{*})\text{ and }A_{j}x^{*}\in\bigcap^{\ell_{j}}_{h=1}Q^{j}_{h}(A_{j}x^{*}),\;\text{for each }j=1,2,\dots,t.\label{Zero=00003D00003D00003D00003D000020function4}
\end{equation}
Thus, $x^{*}\in\mathbb{R}^{n}$ is a solution of the problem \eqref{MSMSICFP}.
\end{proof}

Now, we are in the position to present the following result.

\begin{proposition}\label{proposition2} The function $g:\mathbb{R}^{n}\rightarrow\mathbb{R}$,
defined above in \eqref{Zero=00003D00003D00003D00003D000020function},
is a proximity function for the problem \eqref{MSMSICFP}. \end{proposition}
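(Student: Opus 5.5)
The plan is to verify directly the two defining properties $(P_{1})$ and $(P_{2})$ of Definition \ref{Def:1} for the function $g$ in \eqref{Zero=00003D00003D00003D00003D000020function}. First I will record that $g$ is a well-defined real-valued function on $\mathbb{R}^{n}$. For every $x\in\mathbb{R}^{n}$ the sets $C_{i}(x)$ and $Q^{j}_{h}(A_{j}x)$ are nonempty, closed, and convex. Remark \ref{rmk:proj}(1) then guarantees that the projections $P_{C_{i}(x)}(x)$ and $P_{Q^{j}_{h}(A_{j}x)}(A_{j}x)$ exist and are unique, so each summand of $g(x)$ is a finite real number.

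For $(P_{1})$, I will observe that $g(x)$ is a finite sum of terms of the form $\frac{1}{2}\alpha_{i}\|\cdot\|^{2}$ and $\frac{1}{2}\beta^{j}_{h}\|\cdot\|^{2}$. Since every weight $\alpha_{i}$, $\beta^{j}_{h}$ is positive and every squared norm is nonnegative, each term is nonnegative. Hence $g(x)\geq0$ for all $x\in\mathbb{R}^{n}$.

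For $(P_{2})$, I will simply invoke Proposition \ref{proposition1}, which states exactly that $g(x^{*})=0$ holds if and only if $x^{*}$ solves \eqref{MSMSICFP}. Its converse direction already uses that a sum of nonnegative terms vanishes only if each term vanishes. It also uses that $\|x-P_{C}(x)\|=0$ means $x\in C$, by Remark \ref{rmk:proj}(3).

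There is no genuine obstacle here. The only point requiring care is that $g$ takes finite real values at every point, so that Definition \ref{Def:1} applies. As noted in the first step, this follows from the existence and uniqueness of projections onto the nonempty, closed, and convex sets $C_{i}(x)$ and $Q^{j}_{h}(A_{j}x)$.
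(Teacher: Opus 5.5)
Your proposal is correct and matches the paper's proof: $(P_{1})$ follows from the positive weights and nonnegative squared norms, and $(P_{2})$ is exactly Proposition~\ref{proposition1}. Your well-definedness check via Remark~\ref{rmk:proj}(1) is the same observation the paper makes just before Proposition~\ref{proposition1}.
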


\begin{proof} By definition $g(x)$ is non-negative for all $x\in\mathbb{R}^{n}$.
From Proposition \ref{proposition1}, it is evident that $g$ satisfies
property $(P_{2})$. \soumitra{Thus, all the conditions in Definition
\ref{Def:1} are satisfied.} 
\end{proof}

\begin{corollary}\label{Cor:1} The problem \eqref{MSMSICFP} is
equivalent to the unconstrained minimization problem: 
\begin{equation}
\min_{x\in\mathbb{R}^{n}}g(x).\label{minimization}
\end{equation}
\end{corollary}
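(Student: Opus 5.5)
Corollary \ref{Cor:1} will follow from Propositions \ref{proposition1} and \ref{proposition2}. The key point is that $g$ is nonnegative and vanishes exactly at solutions. The equivalence has to be read in the natural sense for feasibility problems: assuming the problem \eqref{MSMSICFP} is consistent, a point solves \eqref{MSMSICFP} if and only if it is a global minimizer of $g$ in \eqref{minimization}. In the consistent case the optimal value of \eqref{minimization} is then $0$.

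First, let $x^{*}$ solve \eqref{MSMSICFP}. By Proposition \ref{proposition1}, $g(x^{*})=0$. Property $(P_{1})$ from Proposition \ref{proposition2} gives $g(x)\geq0=g(x^{*})$ for every $x\in\mathbb{R}^{n}$. So $x^{*}\in\arg\min_{x\in\mathbb{R}^{n}}g(x)$, and the minimal value of $g$ is $0$.

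Conversely, let $\bar{x}$ be a minimizer of $g$. By consistency there is some solution $x^{*}$, and the previous step gives $g(x^{*})=0$. Hence $0\leq g(\bar{x})\leq g(x^{*})=0$, so $g(\bar{x})=0$. Property $(P_{2})$, that is Proposition \ref{proposition1}, then shows that $\bar{x}$ solves \eqref{MSMSICFP}. Together the two steps show that the solution set of \eqref{MSMSICFP} equals $\arg\min_{x\in\mathbb{R}^{n}}g(x)$.

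No step here is technically difficult. The only care needed is the interpretation: without consistency, a minimizer of $g$ may have $g>0$ and then fail to solve \eqref{MSMSICFP}. I would therefore state the equivalence explicitly under the assumption that a solution exists. In that case every minimizer has zero optimal value and solves \eqref{MSMSICFP}.
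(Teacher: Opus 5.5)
Your proof is correct and follows the paper's own reasoning. The paper states the corollary without a separate proof, as an immediate consequence of Propositions \ref{proposition1} and \ref{proposition2}: $g\geq0$, and $g$ vanishes exactly at solutions. Your explicit restriction to the consistent case is how the paper actually uses the result in the proof of Theorem \ref{MainThm:1}, where it assumes $\varTheta\neq\varnothing$ to identify the solution set with $\arg\min_{x\in\mathbb{R}^{n}}g(x)$ and the minimal value with zero.
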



Before proceeding to prove the continuity of the proximity function
$g$ defined in \eqref{Zero=00003D00003D00003D00003D000020function},
let us establish the following Proposition.

\begin{proposition}\label{proposition3} Let $C:\mathbb{R}^{n}\rightarrow2^{\mathbb{R}^{n}}$
be a point-to-set operator such that $C(x)$ is nonempty, closed,
and convex for each $x\in\mathbb{R}^{n}$. Assume that there exists
a constant $\eta>0$ such that 
\begin{equation}
\|P_{C(x)}(z)-P_{C(y)}(z)\|\leq\eta\|x-y\|,\;\mathrm{for\;all\;}x,y,z\in\mathbb{R}^{n}.\label{inq:1}
\end{equation}
Then the function $\pi(x):=P_{C(x)}(x)$, which maps each $x\in\mathbb{R}^{n}$
to its projection onto the set $C(x)$, is continuous on $\mathbb{R}^{n}$.
\end{proposition}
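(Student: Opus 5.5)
The plan is to show directly that $\pi$ is Lipschitz continuous with constant $1+\eta$, which immediately gives continuity. Fix arbitrary $x,y\in\mathbb{R}^{n}$. The idea is to insert the intermediate point $P_{C(x)}(y)$, which separates the two sources of variation in $\pi$: the change in the set ($C(x)$ versus $C(y)$) and the change in the point being projected ($x$ versus $y$).

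Concretely, the triangle inequality gives
\begin{equation*}
\|\pi(x)-\pi(y)\|\leq\|P_{C(x)}(x)-P_{C(x)}(y)\|+\|P_{C(x)}(y)-P_{C(y)}(y)\|.
\end{equation*}

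For the first term, the set $C(x)$ is fixed. It is nonempty, closed, and convex, so the nonexpansiveness of the projection in Remark \ref{rmk:proj}(2), namely \eqref{proj:nonexpansive}, bounds it by $\|x-y\|$.

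For the second term, the projected point $y$ is fixed and only the set varies. Applying hypothesis \eqref{inq:1} with $z=y$ bounds it by $\eta\|x-y\|$.

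Combining the two bounds gives
\begin{equation*}
\|\pi(x)-\pi(y)\|\leq(1+\eta)\|x-y\|\quad\text{for all }x,y\in\mathbb{R}^{n}.
\end{equation*}
Hence $\pi$ is Lipschitz continuous, and in particular continuous on $\mathbb{R}^{n}$. If an $\varepsilon$--$\delta$ formulation is preferred, one may take $\delta=\varepsilon/(1+\eta)$.

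I do not expect a real obstacle here. The only point needing care is that \eqref{inq:1} holds uniformly in $z$, so that it may be applied at $z=y$ even though $y$ is one of the two varying points. The hypothesis as stated quantifies over all $x,y,z$, so this is legitimate.
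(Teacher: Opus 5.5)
Your proof is correct and is essentially the paper's argument. The paper takes a sequence $x^{k}\to x$, inserts the same kind of intermediate point $P_{C(x^{k})}(x)$, and bounds the two terms by nonexpansiveness and by the $\eta$-hypothesis, getting $(1+\eta)\|x^{k}-x\|$; you state the same $(1+\eta)$-Lipschitz bound directly instead of sequentially.
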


\begin{proof} Let $\{x^{k}\}^{\infty}_{k=0}$ be a sequence in $\mathbb{R}^{n}$
such that $x^{k}\rightarrow x\in\mathbb{R}^{n}$ as $k\rightarrow\infty$.
Then 
\begin{eqnarray}
\|\pi(x^{k})-\pi(x)\| & = & \|P_{C(x^{k})}(x^{k})-P_{C(x)}(x)\|\nonumber \\
 & = & \|P_{C(x^{k})}(x^{k})-P_{C(x^{k})}(x)+P_{C(x^{k})}(x)-P_{C(x)}(x)\|\nonumber \\
 & \leq & \|P_{C(x^{k})}(x^{k})-P_{C(x^{k})}(x)\|+\|P_{C(x^{k})}(x)-P_{C(x)}(x)\|\nonumber \\
 & \leq & \|x^{k}-x\|+\eta\|x^{k}-x\|\nonumber \\
 & \leq & (1+\eta)\|x^{k}-x\|\rightarrow0\text{ as }k\rightarrow\infty.
\end{eqnarray}

Therefore, $\pi(x^{k})\rightarrow\pi(x)$ as $k\rightarrow\infty$
and the proof is complete. \end{proof}

\begin{remark} Let $C:\mathbb{R}^{n}\rightarrow2^{\mathbb{R}^{n}}$
be a point-to-set operator such that $C(x)$ is nonempty, closed,
and convex for each $x\in\mathbb{R}^{n}$. Assume that $C(x)=f(x)+\varOmega$,
where $\varOmega$ is a nonempty, closed, and convex subset of $\mathbb{R}^{n}$,
and $f$ is a Lipschitz continuous function on $\mathbb{R}^{n}$ into
itself with Lipschitz constant $L$. Then the inequality \eqref{inq:1}
holds with the constant $L$. For more details see, e.g., \cite[Lemma 3.2]{YNES2011}.
\end{remark}

Finally, we prove the continuity of the proximity function $g$ defined
in \eqref{Zero=00003D00003D00003D00003D000020function} under certain
assumptions on the point-to-set operators involved.

\begin{proposition}\label{proposition:continuity} Let, for each
$i=1,2,\dots,r$, $C_{i}:\mathbb{R}^{n}\rightarrow2^{\mathbb{R}^{n}}$
be a point-to-set operator such that $C_{i}(x)$ is nonempty, closed,
and convex for each $x\in\mathbb{R}^{n}$. Let, for each $j=1,2,\dots,t$
and $h=1,2,\dots,\ell_{j}$, $Q^{j}_{h}:\mathbb{R}^{m_{j}}\rightarrow2^{\mathbb{R}^{m_{j}}}$
be a finite family of point-to-set operators of $\mathbb{R}^{m_{j}}$
such that $Q^{j}_{h}(u)$ is nonempty, closed, and convex for each
$u\in\mathbb{R}^{m_{j}}$. Let $A_{j}:\mathbb{R}^{n}\rightarrow\mathbb{R}^{m_{j}}$
be a bounded linear operator for each $j=1,2,\dots,t$. Assume that
there exist positive constants $\{\eta_{i}\}^{r}_{i=1}$ and \soumitra{$\{\delta^{j}_{h}\}^{\ell_{j},t}_{h=1,j=1}$}
such that 
\begin{equation}
\|P_{C_{i}(x)}(z)-P_{C_{i}(y)}(z)\|\leq\eta_{i}\|x-y\|,\;\mathrm{for\;all\;}x,y,z\in\mathbb{R}^{n},i=1,2,\dots,r,\label{inq:1.1}
\end{equation}
and \soumitra{
\begin{eqnarray}
\|P_{Q^{j}_{h}(A_{j}x)}(A_{j}z)-P_{Q^{j}_{h}(A_{j}y)}(A_{j}z)\|\leq\delta^{j}_{h}\|A_{j}x-A_{j}y\|, & \mathrm{for\;all} & x,y,z\in\mathbb{R}^{n},h=1,2,\dots,\label{inq:1.2}\\
 &  & \ell_{j},j=1,2,\dots,t.\nonumber 
\end{eqnarray}
} Then the function $g$ is continuous on $\mathbb{R}^{n}$. \end{proposition}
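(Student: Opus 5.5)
We prove continuity term by term. Since $g$ is a finite positive linear combination of squared norms, and the norm and the squaring map are continuous, it suffices to show that each residual map
\begin{equation*}
x\mapsto x-P_{C_{i}(x)}(x)\quad\text{and}\quad x\mapsto A_{j}x-P_{Q^{j}_{h}(A_{j}x)}(A_{j}x)
\end{equation*}
is continuous on $\mathbb{R}^{n}$.

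For the first family, hypothesis \eqref{inq:1.1} is exactly \eqref{inq:1} with $C=C_{i}$ and $\eta=\eta_{i}$. Proposition \ref{proposition3} then shows that $\pi_{i}(x):=P_{C_{i}(x)}(x)$ is continuous. Hence $x\mapsto x-\pi_{i}(x)$ is continuous.

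For the second family we cannot quote Proposition \ref{proposition3} directly: it is stated on $\mathbb{R}^{n}$ with arbitrary $z$, whereas \eqref{inq:1.2} only controls points of the form $A_{j}z$. We therefore repeat its argument in composed form. Fix $j,h$, let $x^{k}\to x$, and split as in the proof of Proposition \ref{proposition3}:
\begin{eqnarray*}
\|P_{Q^{j}_{h}(A_{j}x^{k})}(A_{j}x^{k})-P_{Q^{j}_{h}(A_{j}x)}(A_{j}x)\| & \leq & \|P_{Q^{j}_{h}(A_{j}x^{k})}(A_{j}x^{k})-P_{Q^{j}_{h}(A_{j}x^{k})}(A_{j}x)\|\\
 &  & +\|P_{Q^{j}_{h}(A_{j}x^{k})}(A_{j}x)-P_{Q^{j}_{h}(A_{j}x)}(A_{j}x)\|.
\end{eqnarray*}
The first term is at most $\|A_{j}x^{k}-A_{j}x\|$ by nonexpansiveness \eqref{proj:nonexpansive}, since $Q^{j}_{h}(A_{j}x^{k})$ is nonempty, closed and convex. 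The second term is at most $\delta^{j}_{h}\|A_{j}x^{k}-A_{j}x\|$ by \eqref{inq:1.2} with $z=x$. Together these give the bound $(1+\delta^{j}_{h})\|A_{j}\|_{2}\|x^{k}-x\|\to0$, using boundedness of $A_{j}$. Continuity of $A_{j}$ then gives continuity of the residual $A_{j}x-P_{Q^{j}_{h}(A_{j}x)}(A_{j}x)$.

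Finally, composing with $u\mapsto\|u\|^{2}$ and summing with the weights $\alpha_{i},\beta^{j}_{h}$ yields continuity of $g$. The only mildly delicate point is this restricted-form hypothesis \eqref{inq:1.2}, which is why the second family needs the direct splitting above rather than a citation of Proposition \ref{proposition3}; the rest is routine.
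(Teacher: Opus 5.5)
Your proof is correct and follows the paper's route. Both arguments use Proposition~\ref{proposition3} for the $C_i$ terms, handle the $Q^j_h$ terms by the same split (nonexpansiveness for the first piece, \eqref{inq:1.2} with $z=x$ for the second), and conclude because finite weighted sums of continuous functions are continuous. The paper only says ``similarly'' for the $Q^j_h$ family, and your explicit bound $(1+\delta^j_h)\|A_j\|_2\|x^k-x\|$ spells out what that word hides, since \eqref{inq:1.2} only controls points in $\mathrm{Range}(A_j)$ and Proposition~\ref{proposition3} does not apply verbatim.
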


\begin{proof} Using the assumptions and Proposition \ref{proposition3},
it follows that for each $i$, the function $\|x-P_{C_{i}(x)}(x)\|^{2}$
is continuous for any $x\in\mathbb{R}^{n}$. Similarly, for each $j,h$,
the function $\|A_{j}x-P_{Q^{j}_{h}(A_{j}x)}(A_{j}x)\|^{2}$ is continuous
for any $x\in\mathbb{R}^{n}$, since each $A_{j}$ is a bounded linear
operator. From the well-known fact that a finite linear combination
of continuous functions is continuous, it follows that the function
$g$ of (\ref{Zero=00003D00003D00003D00003D000020function}) is continuous
for all $x\in\mathbb{R}^{n}$.\end{proof}

\subsection{Algorithms for the Multi-Split Multiple-Sets Implicit Convex Feasibility
Problem}

\label{subsec:2} For each $x\in\mathbb{R}^{n}$, let the function
$g$ be of the form: 
\begin{align}
g(x):=u(x)+v(x),\label{Zero=00003D00003D00003D00003D000020function1}
\end{align}
where the summands are 
\begin{equation}
u(x):=\frac{1}{2}\sum^{r}_{i=1}\alpha_{i}\|x-P_{C_{i}(x)}(x)\|^{2}
\end{equation}
and 
\begin{equation}
v(x):=\frac{1}{2}\sum^{t}_{j=1}\sum^{\ell_{j}}_{h=1}\beta^{j}_{h}\|A_{j}x-P_{Q^{j}_{h}(A_{j}x)}(A_{j}x)\|^{2},
\end{equation}
and $\{\alpha_{i}\}^{r}_{i=1}$ and \soumitra{$\{\beta^{j}_{h}\}^{\ell_{j},t}_{h=1,j=1}$}
are positive constants.

From this point onward we assume that the point-to-set operators are
of the following form: 
\begin{equation}
\begin{cases}
\tag{A1}C_{i}(x):=\gamma_{i}U_{i}(\varOmega_{i})+M_{i}x\\
Q^{j}_{h}(A_{j}x):=\nu^{j}_{h}W^{j}_{h}(\varTheta^{j}_{h})+N^{j}_{h}A_{j}x,
\end{cases}\label{inq:3}
\end{equation}
where $x\in\mathbb{R}^{n}$, $M_{i}\in\mathbb{R}^{n\times n}$ and
$N^{j}_{h}\in\mathbb{R}^{m_{j}\times m_{j}}$ are matrices, $U_{i}\in\mathbb{R}^{n\times n},W^{j}_{h}\in\mathbb{R}^{m_{j}\times m_{j}}$
are unitary matrices, $\varOmega_{i}$ and $\varTheta^{j}_{h}$ are
nonempty, closed, and convex subsets of $\mathbb{R}^{n}$ and $\mathbb{R}^{m_{j}}$,
respectively, $\gamma_{i}>0$ and $\nu^{j}_{h}>0$ are fixed constants
for all $h,j,i$. In this section of the paper, we
assume that \eqref{inq:3} holds. These operators generalize
the point-to-set operators that are used in Lemma \ref{Lemma4}.

\begin{remark}\label{Rmk:1} 
\begin{enumerate}
\item Using Lemma \ref{Lemma4}, for $x\in\mathbb{R}^{n}$, it is straightforward
that 
\begin{equation}
u(x)=\frac{1}{2}\sum\limits^{r}_{i=1}\alpha_{i}\gamma^{2}_{i}\|(I-P_{\varOmega_{i}})(K_{i}x)\|^{2},
\end{equation}
where $K_{i}:=\frac{1}{\gamma_{i}}U^{T}_{i}(I-M_{i})$ for $i=1,2,\dots,r$.
\item From \cite[Proposition 3.1]{YCEN2016} it is true that the function
$u$ is convex, continuously differentiable on $\mathbb{R}^{n}$,
and its gradient is 
\begin{equation}
\nabla u(x)=\sum\limits^{r}_{i=1}\alpha_{i}\gamma^{2}_{i}K^{T}_{i}(I-P_{\varOmega_{i}})(K_{i}x),\;x\in\mathbb{R}^{n}.
\end{equation}
Furthermore, $\nabla u$ is Lipschitz continuous with constant 
\begin{equation}
L_{\nabla u}:=\sum\limits^{r}_{i=1}\alpha_{i}\|I-M_{i}\|^{2}_{2}.\label{lip:nabu}
\end{equation}
\soumitra{Indeed, using the fact that the projection
operator is firmly nonexpansive, the definition of matrix norm, and $\|K^{T}_{i}K_{i}\|_2=(1/\gamma^{2}_{i})\|I-M_{i}\|^{2}_{2}$,
for any $x,y\in\mathbb{R}^{n}$ we obtain {\allowdisplaybreaks 
\begin{eqnarray}
\|\nabla u(x) & - & \nabla u(y)\|\label{ineq1:Rmk:1}\nonumber \\
 & = & \Big\|\sum\limits^{r}_{i=1}\alpha_{i}\gamma^{2}_{i}K^{T}_{i}(I-P_{\varOmega_{i}})(K_{i}x)-\sum\limits^{r}_{i=1}\alpha_{i}\gamma^{2}_{i}K^{T}_{i}(I-P_{\varOmega_{i}})(K_{i}y)\Big\|\nonumber \\
 & \leq & \sum\limits^{r}_{i=1}\alpha_{i}\gamma^{2}_{i}\|K^{T}_{i}\|_{2}\|(I-P_{\varOmega_{i}})(K_{i}x)-(I-P_{\varOmega_{i}})(K_{i}y)\|\nonumber \\
 & \leq & \sum\limits^{r}_{i=1}\alpha_{i}\gamma^{2}_{i}\|K^{T}_{i}\|_{2}\|K_{i}x-K_{i}y\|\nonumber \\
 & \leq & \sum\limits^{r}_{i=1}\alpha_{i}\gamma^{2}_{i}\|K^{T}_{i}\|_{2}\|K_{i}\|_{2}\|x-y\|\nonumber \\
 & = & \sum\limits^{r}_{i=1}\alpha_{i}\gamma^{2}_{i}\|K_{i}\|^{2}_{2}\|x-y\|\nonumber \\
 & = & \sum\limits^{r}_{i=1}\alpha_{i}\gamma^{2}_{i}\|K^{T}_{i}K_{i}\|_{2}\|x-y\|\nonumber \\
 & = & \sum\limits^{r}_{i=1}\alpha_{i}\gamma^{2}_{i}\Big(\frac{1}{\gamma^{2}_{i}}\|I-M_{i}\|^{2}_{2}\Big)\|x-y\|\nonumber \\
 & = & \sum\limits^{r}_{i=1}\alpha_{i}\|I-M_{i}\|^{2}_{2}\|x-y\|.
\end{eqnarray}
} } 
\item \soumitra{The assumption $\eqref{inq:3}$ satisfies the continuity
assumption on \eqref{inq:1.1} and \eqref{inq:1.2}. Indeed, since
$C_{i}(x):=\gamma_{i}U_{i}(\varOmega_{i})+M_{i}x$ is a translation
of the fixed closed convex set $\gamma_{i}U_{i}(\varOmega_{i})$,
the nonexpansiveness of the projection operator and Lemma \ref{lemma:translation:projection}
imply that 
\begin{equation}
\|P_{C_{i}(x)}(z)-P_{C_{i}(y)}(z)\|\leq2\|M_{i}\|_{2}\|x-y\|,\;\mathrm{for\;all\;}x,y,z\in\mathbb{R}^{n},i=1,2,\dots,r.
\end{equation}
Hence, the condition $\eqref{inq:1.1}$ holds with $\eta_{i}=2\|M_{i}\|_{2}$.
Similarly, since $Q^{j}_{h}(A_{j}x):=\nu^{j}_{h}W^{j}_{h}(\varTheta^{j}_{h})+N^{j}_{h}A_{j}x$,
the nonexpansiveness of the projection operator and Lemma \ref{lemma:translation:projection}
imply that 
\begin{eqnarray}
\|P_{Q^{j}_{h}(A_{j}x)}(A_{j}z)-P_{Q^{j}_{h}(A_{j}y)}(A_{j}z)\|\leq2 & \|N^{j}_{h}\|_{2} & \|A_{j}x-A_{j}y\|,\mathrm{for\;all\;}x,y,z\in\mathbb{R}^{n},\\
 &  & h=1,2,\dots,\ell_{j},j=1,2,\dots,t.\nonumber 
\end{eqnarray}
Hence, the condition \eqref{inq:1.2} holds with $\delta^{j}_{h}=2\|N^{j}_{h}\|_{2}$.} 
\item \soumitra{In \eqref{inq:3}, we assumed that $Q^{j}_{h}$ is specified
only for Range$(A_{j})=\{A_{j}x\vert x\in\mathbb{R}^{n}\}$, i.e.,
\[
Q^{j}_{h}(A_{j}x)=\nu^{j}_{h}W^{j}_{h}(\varTheta^{j}_{h})+N^{j}_{h}A_{j}x,\;x\in\mathbb{R}^{n}.
\]
This is weaker than assuming that 
\[
Q^{j}_{h}(u)=\nu^{j}_{h}W^{j}_{h}(\varTheta^{j}_{h})+N^{j}_{h}u,\;u\in\mathbb{R}^{m_{j}},
\]
which defines $Q^{j}_{h}$ on the whole space $\mathbb{R}^{m_{j}}$.
However, when Range$(A_{j})=\mathbb{R}^{m_{j}}$, i.e., when $A_{j}$
is surjective, both assumptions are equivalent. } 
\end{enumerate}
\end{remark}

\begin{proposition}\label{Prop:1} The function $v$ is convex, continuously
differentiable on $\mathbb{R}^{n}$ and its gradient is given by the
formula 
\begin{equation}
\nabla v(x)=\sum^{t}_{j=1}\sum^{\ell_{j}}_{h=1}\beta^{j}_{h}{(\nu^{j}_{h})}^{2}{(L^{j}_{h})}^{T}(I-P_{\varTheta^{j}_{h}})(L^{j}_{h}x),\;x\in\mathbb{R}^{n}.\label{inq:5.1}
\end{equation}
Furthermore, $\nabla v$ is Lipschitz continuous with constant 
\begin{equation}
L_{\nabla v}:=\sum^{t}_{j=1}\sum^{\ell_{j}}_{h=1}\beta^{j}_{h}\|(I-N^{j}_{h})A_{j}\|^{2}_{2}.\label{lip:nab=00003D00003D00003D00003D000020v}
\end{equation}
\end{proposition}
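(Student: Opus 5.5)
The plan is to reduce $v$ to the same structural form as $u$ in Remark \ref{Rmk:1}(1), and then argue exactly as for $u$.

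\textbf{Step 1: rewrite each summand of $v$.} Fix $j$ and $h$. By assumption \eqref{inq:3}, $Q^{j}_{h}(A_{j}x)=\nu^{j}_{h}W^{j}_{h}(\varTheta^{j}_{h})+N^{j}_{h}A_{j}x$. Apply Lemma \ref{Lemma4} in $\mathbb{R}^{m_{j}}$ with $D=\nu^{j}_{h}I$, $U=W^{j}_{h}$, translation vector $N^{j}_{h}A_{j}x$, and evaluation point $y=A_{j}x$. This gives
\[
A_{j}x-P_{Q^{j}_{h}(A_{j}x)}(A_{j}x)=\nu^{j}_{h}W^{j}_{h}(I-P_{\varTheta^{j}_{h}})(L^{j}_{h}x),\qquad L^{j}_{h}:=\frac{1}{\nu^{j}_{h}}(W^{j}_{h})^{T}(I-N^{j}_{h})A_{j}.
\]
Since $W^{j}_{h}$ is unitary, it preserves norms, so
\[
v(x)=\frac{1}{2}\sum_{j}\sum_{h}\beta^{j}_{h}(\nu^{j}_{h})^{2}\|(I-P_{\varTheta^{j}_{h}})(L^{j}_{h}x)\|^{2}.
\]
Lemma \ref{Lemma4} is stated with a matrix times $x$ as the translation and for square spaces. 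Here the translation $N^{j}_{h}A_{j}x$ is just a fixed vector in $\mathbb{R}^{m_{j}}$ once $x$ is fixed, so the identity still applies; alternatively one can use Lemma \ref{lemma:translation:projection} together with the scaling and unitary parts.

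\textbf{Step 2: convexity, differentiability, gradient.} The function $\phi(w)=\frac{1}{2}\|(I-P_{\varTheta})w\|^{2}=\frac{1}{2}\mathrm{dist}(w,\varTheta)^{2}$ is convex and continuously differentiable, with $\nabla\phi=I-P_{\varTheta}$. This is the standard fact underlying \cite[Proposition 3.1]{YCEN2016}, which the paper already uses for $u$. Composing a convex $C^{1}$ function with the linear map $L^{j}_{h}$ keeps it convex and $C^{1}$, and the chain rule gives the gradient $(L^{j}_{h})^{T}(I-P_{\varTheta^{j}_{h}})(L^{j}_{h}x)$. Summing with the positive weights $\beta^{j}_{h}(\nu^{j}_{h})^{2}$ yields convexity of $v$, its continuous differentiability, and formula \eqref{inq:5.1}.

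\textbf{Step 3: Lipschitz constant.} Copy the chain \eqref{ineq1:Rmk:1}. The operator $I-P_{\varTheta}$ is nonexpansive, since it is firmly nonexpansive. This gives
\[
\|\nabla v(x)-\nabla v(y)\|\le\sum_{j,h}\beta^{j}_{h}(\nu^{j}_{h})^{2}\|L^{j}_{h}\|_{2}^{2}\|x-y\|.
\]
Then compute $(L^{j}_{h})^{T}L^{j}_{h}=(\nu^{j}_{h})^{-2}((I-N^{j}_{h})A_{j})^{T}(I-N^{j}_{h})A_{j}$, using $W^{j}_{h}(W^{j}_{h})^{T}=I$. Hence $\|L^{j}_{h}\|_{2}^{2}=(\nu^{j}_{h})^{-2}\|(I-N^{j}_{h})A_{j}\|_{2}^{2}$, and the $\nu$ factors cancel to give \eqref{lip:nab=00003D00003D00003D00003D000020v}.

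The main obstacle is Step 1, specifically checking carefully that Lemma \ref{Lemma4} applies with the translation $N^{j}_{h}A_{j}x$ and the evaluation point $A_{j}x$, so that the argument of the projection becomes exactly $L^{j}_{h}x$. The remaining steps are routine adaptations of Remark \ref{Rmk:1}.
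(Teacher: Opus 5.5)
Your proposal is correct and follows essentially the same route as the paper. Both rewrite $v$ via Lemma \ref{Lemma4} as $\frac{1}{2}\sum_{j}\sum_{h}\beta^{j}_{h}(\nu^{j}_{h})^{2}\|(I-P_{\varTheta^{j}_{h}})(L^{j}_{h}x)\|^{2}$, read off convexity and the gradient from this squared-distance form, and obtain the Lipschitz constant from the nonexpansiveness of $I-P_{\varTheta^{j}_{h}}$ together with $(L^{j}_{h})^{T}L^{j}_{h}=(\nu^{j}_{h})^{-2}((I-N^{j}_{h})A_{j})^{T}(I-N^{j}_{h})A_{j}$. Your remark that $N^{j}_{h}A_{j}x$ is just a fixed translation vector, so Lemma \ref{Lemma4} applies in $\mathbb{R}^{m_{j}}$, makes explicit a point the paper leaves implicit.
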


\begin{proof} For any $x\in\mathbb{R}^{n}$, using \eqref{inq:3}
and Lemma \ref{Lemma4}, we obtain, {\allowdisplaybreaks 
\begin{eqnarray}
v(x) & = & \frac{1}{2}\sum^{t}_{j=1}\sum^{\ell_{j}}_{h=1}\beta^{j}_{h}\|A_{j}x-P_{Q^{j}_{h}(A_{j}x)}(A_{j}x)\|^{2}\label{inq:4}\nonumber \\
 & = & \frac{1}{2}\sum^{t}_{j=1}\sum^{\ell_{j}}_{h=1}\beta^{j}_{h}\big\| A_{j}x-P_{(\nu^{j}_{h}W^{j}_{h}(\varTheta^{j}_{h})+N^{j}_{h}A_{j}x})(A_{j}x)\big\|^{2}\nonumber \\
 & = & \frac{1}{2}\sum^{t}_{j=1}\sum^{\ell_{j}}_{h=1}\beta^{j}_{h}\Big\| A_{j}x-\nu^{j}_{h}W^{j}_{h}P_{\varTheta^{j}_{h}}\Big(\frac{1}{\nu^{j}_{h}}{(W^{j}_{h})}^{T}(A_{j}x-N^{j}_{h}A_{j}x)\Big)-N^{j}_{h}A_{j}x\Big\|^{2}\nonumber \\
 & = & \frac{1}{2}\sum^{t}_{j=1}\sum^{\ell_{j}}_{h=1}\beta^{j}_{h}\Big\|\nu^{j}_{h}W^{j}_{h}\Big(\frac{1}{\nu^{j}_{h}}{(W^{j}_{h})}^{T}(A_{j}x-N^{j}_{h}A_{j}x)-P_{\varTheta^{j}_{h}}\Big(\frac{1}{\nu^{j}_{h}}{(W^{j}_{h})}^{T}(A_{j}x-N^{j}_{h}A_{j}x)\Big)\Big)\Big\|^{2}\nonumber \\
 & = & \frac{1}{2}\sum^{t}_{j=1}\sum^{\ell_{j}}_{h=1}\beta^{j}_{h}{(\nu^{j}_{h})}^{2}\Big\|\Big(I-P_{\varTheta^{j}_{h}}\Big)\Big(\frac{1}{\nu^{j}_{h}}{(W^{j}_{h})}^{T}(A_{j}x-N^{j}_{h}A_{j}x)\Big)\Big\|^{2}\nonumber \\
 & = & \frac{1}{2}\sum^{t}_{j=1}\sum^{\ell_{j}}_{h=1}\beta^{j}_{h}{(\nu^{j}_{h})}^{2}\big\|(I-P_{\varTheta^{j}_{h}})(L^{j}_{h}x)\big\|^{2},
\end{eqnarray}
}where 
\begin{equation}
L^{j}_{h}:=\frac{1}{\nu^{j}_{h}}{(W^{j}_{h})}^{T}(I-N^{j}_{h})A_{j}.\label{ineq:extra}
\end{equation}

\soumitra{Clearly, $L^{j}_{h}\in\mathbb{R}^{m_{j}\times n}$. Using the definition of matrix norm, we get 
\begin{eqnarray}
\|L^{j}_{h}x-L^{j}_{h}y\|\leq\|L^{j}_{h}\|_{2}\|x-y\|,\label{ineq1::extra1}
\end{eqnarray}
for $x,y\in\mathbb{R}^{n}$. }

The convexity of $v$ follows from \eqref{inq:4}, see for instance
\cite[pp. 367--368]{EMAS2007}. Using the fact of \cite[Equation (3.15), p. 617]{YCEN2016},
we obtain 
\begin{equation}
\nabla v(x)=\sum^{t}_{j=1}\sum^{\ell_{j}}_{h=1}\beta^{j}_{h}{(\nu^{j}_{h})}^{2}{(L^{j}_{h})}^{T}(I-P_{\varTheta^{j}_{h}})(L^{j}_{h}x).\label{inq:5}
\end{equation}

Now, 
\begin{eqnarray}
(L^j_{h})^{T}L^{j}_{h} & = & \frac{1}{({\nu^{j}_{h})}^{2}}{({(W^{j}_{h})}^{T}(I-N^{j}_{h})A_{j})}^{T}{(W^{j}_{h})}^{T}(I-N^{j}_{h})A_{j}\label{inq:6}\nonumber \\
 & = & \frac{1}{({\nu^{j}_{h}})^{2}}((I-N^{j}_{h})A_{j})^{T}(I-N^{j}_{h})A_{j}.
\end{eqnarray}

Again, 
\begin{eqnarray}
\|(L^j_{h})^{T}L^{j}_{h}\|_{2} & =\frac{1}{{(\nu^{j}_{h})}^{2}}\|((I-N^{j}_{h})A_{j})^{T}(I-N^{j}_{h})A_{j}\|_{2}=\Big(\frac{1}{\nu^{j}_{h}}\|(I-N^{j}_{h})A_{j}\|_{2}\Big)^{2}.\label{inq:7.1}
\end{eqnarray}

Finally, for any $x,y\in\mathbb{R}^{n}$, using the fact that the projection operator is firmly nonexpansive, \soumitra{the definition of matrix norm,
\eqref{ineq1::extra1}, and \eqref{inq:7.1}, we obtain} {\allowdisplaybreaks
\begin{eqnarray}
\big\|\nabla v(x) & - & \nabla v(y)\big\|\nonumber \\
 & = & \Big\|\sum^{t}_{j=1}\sum^{\ell_{j}}_{h=1}\beta^{j}_{h}{(\nu^{j}_{h})}^{2}{(L^{j}_{h})}^{T}(I-P_{\varTheta^{j}_{h}})(L^{j}_{h}x)-\sum^{t}_{j=1}\sum^{\ell_{j}}_{h=1}\beta^{j}_{h}{(\nu^{j}_{h})}^{2}{(L^{j}_{h})}^{T}(I-P_{\varTheta^{j}_{h}})(L^{j}_{h}y)\Big\|\nonumber \\
 & = & \soumitra{\Big\|\sum^{t}_{j=1}\sum^{\ell_{j}}_{h=1}\beta^{j}_{h}{(\nu^{j}_{h})}^{2}{(L^{j}_{h})}^{T}((I-P_{\varTheta^{j}_{h}})(L^{j}_{h}x)-(I-P_{\varTheta^{j}_{h}})(L^{j}_{h}y))\Big\|}\nonumber \\
 & \leq & \sum^{t}_{j=1}\sum^{\ell_{j}}_{h=1}\beta^{j}_{h}{(\nu^{j}_{h})}^{2}\|{(L^{j}_{h})}^{T}\|_{2}\|L^{j}_{h}x-L^{j}_{h}y\|\nonumber \\
 & \leq & \sum^{t}_{j=1}\sum^{\ell_{j}}_{h=1}\beta^{j}_{h}{(\nu^{j}_{h})}^{2}\|{(L^{j}_{h})}^{T}\|_{2}\|L^{j}_{h}\|_{2}\|x-y\|\nonumber \\
 & = & \soumitra{\sum^{t}_{j=1}\sum^{\ell_{j}}_{h=1}\beta^{j}_{h}{(\nu^{j}_{h})}^{2}\|{L^{j}_{h}}\|^{2}_{2}\|x-y\|\nonumber}\\
 & = & \sum^{t}_{j=1}\sum^{\ell_{j}}_{h=1}\beta^{j}_{h}{(\nu^{j}_{h})}^{2}\|{(L^{j}_{h})}^{T}L^{j}_{h}\|_{2}\|x-y\|\nonumber \\
 & = & \sum^{t}_{j=1}\sum^{\ell_{j}}_{h=1}\beta^{j}_{h}{(\nu^{j}_{h})}^{2}\Big(\frac{1}{{(\nu^{j}_{h})}^{2}}\|(I-N^{j}_{h})A_{j}\|^{2}_{2}\Big)\|x-y\|\nonumber \\
 & = & \sum^{t}_{j=1}\sum^{\ell_{j}}_{h=1}\beta^{j}_{h}\|(I-N^{j}_{h})A_{j}\|^{2}_{2}\|x-y\|.
\end{eqnarray}
}

This implies that $\nabla v$ is $\sum^{t}_{j=1}\sum^{\ell_{j}}_{h=1}\beta^{j}_{h}\|(I-N^{j}_{h})A_{j}\|^{2}_{2}$-Lipschitz
continuous. \end{proof}

\begin{proposition}\label{Prop:2} The function $g$ of (\ref{Zero=00003D00003D00003D00003D000020function})
is convex and continuously differentiable on $\mathbb{R}^{n}$ and
its gradient is 
\begin{equation}
\nabla g(x)=\sum\limits^{r}_{i=1}\alpha_{i}\gamma^{2}_{i}K^{T}_{i}(I-P_{\varOmega_{i}})(K_{i}x)+\sum^{t}_{j=1}\sum^{\ell_{j}}_{h=1}\beta^{j}_{h}{(\nu^{j}_{h})}^{2}{(L^{j}_{h})}^{T}(I-P_{\varTheta^{j}_{h}})(L^{j}_{h}x),\;x\in\mathbb{R}^{n},
\end{equation}
where $\{\alpha_{i}\}^{r}_{i=1},\{\gamma_{i}\}^{r}_{i=1},\soumitra{\{\beta^{j}_{h}\}^{\ell_{j},t}_{h=1,j=1},\{\nu^{j}_{h}\}^{\ell_{j},t}_{h=1,j=1}}$
are positive constants. Furthermore, $\nabla g$ is $L_{\nabla g}:=\sum\limits^{r}_{i=1}\alpha_{i}\|I-M_{i}\|^{2}_{2}+\sum^{t}_{j=1}\sum^{\ell_{j}}_{h=1}\beta^{j}_{h}\|(I-N^{j}_{h})A_{j}\|^{2}_{2}$-Lipschitz
continuous. That is, 
\begin{equation}
\|\nabla g(x)-\nabla g(y)\|\leq L_{\nabla g}\|x-y\|,\;\mathrm{for\;all\;}x,y\in\mathbb{R}^{n}.
\end{equation}
\end{proposition}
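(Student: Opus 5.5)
The plan is to use the decomposition $g=u+v$ from \eqref{Zero=00003D00003D00003D00003D000020function1} and combine the properties of the two summands that have already been established.

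\textbf{Step 1: properties of $u$.} By Remark \ref{Rmk:1}(1)--(2), which relies on Lemma \ref{Lemma4} and \cite[Proposition 3.1]{YCEN2016}, $u$ is convex and continuously differentiable. Its gradient is
\[
\nabla u(x)=\sum_{i=1}^{r}\alpha_i\gamma_i^2K_i^T(I-P_{\varOmega_i})(K_ix),
\]
and $\nabla u$ is Lipschitz with constant $L_{\nabla u}=\sum_{i}\alpha_i\|I-M_i\|_2^2$ by \eqref{lip:nabu}.

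\textbf{Step 2: properties of $v$.} By Proposition \ref{Prop:1}, $v$ is convex and continuously differentiable. Its gradient is \eqref{inq:5.1}, and $\nabla v$ is Lipschitz with constant $L_{\nabla v}$ from \eqref{lip:nab=00003D00003D00003D00003D000020v}.

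\textbf{Step 3: combining.} A sum of two convex functions is convex, so $g$ is convex. A sum of two continuously differentiable functions is continuously differentiable, and the gradient is additive. This gives $\nabla g=\nabla u+\nabla v$, which is exactly the displayed formula.

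\textbf{Step 4: Lipschitz constant.} For any $x,y\in\mathbb{R}^n$, the triangle inequality gives
\[
\|\nabla g(x)-\nabla g(y)\|\le\|\nabla u(x)-\nabla u(y)\|+\|\nabla v(x)-\nabla v(y)\|\le (L_{\nabla u}+L_{\nabla v})\|x-y\|.
\]
The constant $L_{\nabla u}+L_{\nabla v}$ is precisely $L_{\nabla g}$.

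Every step is a direct combination of earlier results, so I do not expect a real obstacle. The only point needing care is that $u$ in Remark \ref{Rmk:1} is taken from \cite{YCEN2016}, which uses the specific form \eqref{inq:3} for $C_i$. Under the standing assumption \eqref{inq:3} this applies verbatim.
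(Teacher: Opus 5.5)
Your proposal is correct and takes essentially the same route as the paper. It splits $g=u+v$, takes convexity, continuous differentiability, the gradient formulas and the Lipschitz constants of $\nabla u$ and $\nabla v$ from Remark~\ref{Rmk:1} and Proposition~\ref{Prop:1}, and then adds the two constants via the triangle inequality to obtain $L_{\nabla g}$.
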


\begin{proof} The function $g$ is convex and continuously differentiable
since the functions $u$ and $v$ are both convex and continuously
differentiable on $\mathbb{R}^{n}$ (see Remark \ref{Rmk:1} and Proposition
\ref{Prop:1}).

Since for any $x\in\mathbb{R}^{n}$, $g(x)=u(x)+v(x)$, it follows
that $\nabla g(x)=\nabla u(x)+\nabla v(x)$. Thereby, for any $x,y\in\mathbb{R}^{n}$
we have \soumitra{
\begin{eqnarray}
\|\nabla g(x)-\nabla g(y)\| & = & \|\nabla u(x)+\nabla v(x)-\nabla u(y)-\nabla v(y)\|\label{inq:6}\nonumber \\
 & \leq & \|\nabla u(x)-\nabla u(y)\|+\|\nabla v(x)-\nabla v(y)\|.
\end{eqnarray}
}

Since $\nabla u$ is $\sum\limits^{r}_{i=1}\alpha_{i}\|I-M_{i}\|^{2}_{2}$-Lipschitz
continuous (see Remark \ref{Rmk:1}) and $\nabla v$ is\\
 $\sum^{t}_{j=1}\sum^{\ell_{j}}_{h=1}\beta^{j}_{h}\|(I-N^{j}_{h})A_{j}\|^{2}_{2}$-Lipschitz
continuous (see Proposition \ref{Prop:1}), from inequality \eqref{inq:6}
we obtain \soumitra{
\begin{eqnarray}
 & \| & \nabla g(x)-\nabla g(y)\|\label{inq:7}\nonumber \\
 & \leq & \|\nabla u(x)-\nabla u(y)\|+\|\nabla v(x)-\nabla v(y)\|\nonumber \\
 & \leq & \Big(\sum\limits^{r}_{i=1}\alpha_{i}\|I-M_{i}\|^{2}_{2}+\sum^{t}_{j=1}\sum^{\ell_{j}}_{h=1}\beta^{j}_{h}\|(I-N^{j}_{h})A_{j}\|^{2}_{2}\Big)\|x-y\|,\;\mathrm{for\;all\;}x,y\in\mathbb{R}^{n}.
\end{eqnarray}
} Therefore, $\nabla g$ is $\sum\limits^{r}_{i=1}\alpha_{i}\|I-M_{i}\|^{2}_{2}+\sum^{t}_{j=1}\sum^{\ell_{j}}_{h=1}\beta^{j}_{h}\|(I-N^{j}_{h})A_{j}\|^{2}_{2}$-Lipschitz
continuous. \end{proof}

\subsubsection{The simultaneous algorithm and its convergence}

\label{subsec:The-simultaneous-algorithm}

In this subsection, we introduce a new simultaneous (parallel) projection
algorithm to solve the problem \eqref{MSMSICFP} for point-to-set
operators of the form as in \eqref{inq:3}.

 \makeatletter \newenvironment{breakablealgorithm}{
    \begin{center}
       \refstepcounter{algorithm}
        \renewcommand{\caption}[1]
        {
           \addcontentsline{loa}{algorithm}{\protect\numberline{\thealgorithm}##1}
           \parbox{\textwidth}
            {
                \hrule height.8pt depth0pt \kern2pt
                {\raggedright\textbf{\fname@algorithm~\thealgorithm} ##1\par}
                \kern2pt\hrule\kern2pt
            }
        }
}{
        \kern2pt\hrule\relax
    \end{center}
}\begin{breakablealgorithm} \caption{\soumitra{A Simultaneous Projection Gradient Algorithm (SPGA)} for
MSMS-ICFP}
\label{Algo:SPGA} \begin{algorithmic}[1] \vspace{-0.2cm}
 \State \textbf{Input}: \soumitra{$x^{0}\in\mathbb{R}^{n}$.}
\State Compute $L_{\nabla g}=\sum\limits^{r}_{i=1}\alpha_{i}\|I-M_{i}\|^{2}_{2}+\sum^{t}_{j=1}\sum^{\ell_{j}}_{h=1}\beta^{j}_{h}\|(I-N^{j}_{h})A_{j}\|^{2}_{2}$.
\For{$i=1,2,\dots,r$} 
\State Compute 
\begin{equation}
K_{i}=\frac{1}{\gamma_{i}}U^{T}_{i}(I-M_{i}).
\end{equation}
\EndFor 
\For{$j=1,2,\dots,t$} 
\For{$h=1,2,\dots,\ell_{j}$}
\State Compute 
\begin{eqnarray}
\begin{aligned}L^{j}_{h}=\frac{1}{\nu^{j}_{h}}{(W^{j}_{h})}^{T}(I-N^{j}_{h})A_{j}.\end{aligned}
\end{eqnarray}
\EndFor 
\EndFor 
\State Choose $\lambda\in(0,2/L_{\nabla g})$.
\soumitra{\For{$k=0,1,2,\dots$} \State Compute 
\begin{eqnarray}
{\small \begin{aligned}x^{k+1} & :=x^{k}-\lambda\sum\limits^{r}_{i=1}\alpha_{i}\gamma^{2}_{i}K^{T}_{i}(I-P_{\varOmega_{i}})(K_{i}x^{k})\\
 & \hspace{1.5cm}-\lambda\sum^{t}_{j=1}\sum^{\ell_{j}}_{h=1}\beta^{j}_{h}{(\nu^{j}_{h})}^{2}{(L^{j}_{h})}^{T}(I-P_{\varTheta^{j}_{h}})(L^{j}_{h}x^{k}).
\end{aligned}
}\label{Algo1:ineq:8}
\end{eqnarray}
} \EndFor \end{algorithmic} \end{breakablealgorithm}

\medskip{}
\begin{remark} In Algorithm \ref{Algo:SPGA}, we use a constant step-size
and prove the convergence of the sequence $\{x^{k}\}^{\infty}_{k=0}$
generated by it in Theorem \ref{MainThm:1} below. However, our proof
of convergence carries over also with an appropriate choice of variable
step-sizes. \end{remark}

\begin{theorem}\label{MainThm:1} Denote the solution set of the
problem \eqref{MSMSICFP} by $\varTheta$ and assume that it is nonempty.
Then any sequence $\{x^{k}\}^{\infty}_{k=0}$ generated by Algorithm
\ref{Algo:SPGA} converges in norm to a point in $\varTheta$. \end{theorem}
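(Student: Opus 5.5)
The plan is to recognize Algorithm \ref{Algo:SPGA} as the plain gradient method $x^{k+1}=x^{k}-\lambda\nabla g(x^{k})$ applied to the proximity function $g$ of \eqref{Zero=00003D00003D00003D00003D000020function}. Convergence then follows from Corollary \ref{Co1:Grad:thm}, once its hypotheses are checked and its solution set $G$ is identified with $\varTheta$.

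First, I would observe that by Proposition \ref{Prop:2} the update \eqref{Algo1:ineq:8} is exactly $x^{k+1}=x^{k}-\lambda\nabla g(x^{k})$. The first sum is $\nabla u(x^{k})$ by Remark \ref{Rmk:1}(2), and the second is $\nabla v(x^{k})$ by Proposition \ref{Prop:1}. The same Proposition \ref{Prop:2} gives that $g$ is convex, continuously differentiable, and has $L_{\nabla g}$-Lipschitz gradient. The algorithm chooses $\lambda\in(0,2/L_{\nabla g})$, which is precisely the step-size range Corollary \ref{Co1:Grad:thm} requires. One degenerate case needs a remark: if $L_{\nabla g}=0$, then $\nabla g\equiv0$ and the iterates are constant. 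This is harmless, and I would dispose of it separately by assuming $L_{\nabla g}>0$ in the main argument.

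Second, I would identify $G=\arg\min_{x\in\mathbb{R}^{n}} g(x)$ with $\varTheta$. By Proposition \ref{proposition2}, $g\geq0$, and $g(x)=0$ if and only if $x\in\varTheta$ (Proposition \ref{proposition1}). Since $\varTheta\neq\emptyset$ by assumption, the minimum value of $g$ is $0$. Hence $G=\{x: g(x)=0\}=\varTheta$, and in particular $G$ is nonempty. This is the content of Corollary \ref{Cor:1}.

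Third, I would apply Corollary \ref{Co1:Grad:thm}, which gives convergence in norm of $\{x^{k}\}_{k=0}^{\infty}$ to a point of $G=\varTheta$, for arbitrary $x^{0}$. There is no genuine obstacle, since everything has been prepared earlier. The only point requiring care is the second step: $G$ coincides with $\varTheta$ only because consistency forces the minimal value of $g$ to be zero. In the inconsistent case one would merely get convergence to a minimizer of $g$. I would state this explicitly so that the use of the hypothesis $\varTheta\neq\emptyset$ is visible.
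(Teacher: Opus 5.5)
Your proposal is correct and follows the paper's proof essentially step by step. You identify the update \eqref{Algo1:ineq:8} as gradient descent on $g$ via Proposition \ref{Prop:2}, use Propositions \ref{proposition1}--\ref{proposition2} (Corollary \ref{Cor:1}) with $\varTheta\neq\emptyset$ to obtain $\arg\min g=\varTheta$, and then invoke Corollary \ref{Co1:Grad:thm}. Your separate handling of the degenerate case $L_{\nabla g}=0$, where $\nabla g\equiv0$ makes every point a minimizer of $g$ and hence a point of $\varTheta$, is a small refinement the paper leaves implicit.
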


\begin{proof} \soumitra{Since $\varTheta$ is nonempty, the problem
\eqref{MSMSICFP} is consistent. Therefore, by Proposition \ref{proposition2}
and Corollary \ref{Cor:1}, the solution set of \eqref{MSMSICFP}
coincides with the global minimizers of the unconstrained minimization
problem \eqref{minimization} and the minimum value is zero. Moreover,
by Proposition \ref{Prop:2}, the function $g$ of (\ref{Zero=00003D00003D00003D00003D000020function})
is convex, continuously differentiable, and its gradient is $L_{\nabla g}$-Lipschitz
continuous. 
Since the step-size $\lambda\in(0,2/L_{\nabla g})$, all the assumptions
of Corollary \ref{Co1:Grad:thm} are satisfied. By the expression
in \eqref{Algo1:ineq:8}, our Algorithm \ref{Algo:SPGA} is a standard
gradient descent algorithm for the problem \eqref{MSMSICFP}. Therefore,
by applying Corollary \ref{Co1:Grad:thm}, we conclude that the sequence
$\{x^{k}\}^{\infty}_{k=0}$ generated by Algorithm \ref{Algo:SPGA}
converges in norm to a point $y^{*}\in\arg\min_{x\in\mathbb{R}^{n}}g(x)$.
Since the set of all global minimizers of $g$ coincides with $\varTheta$,
$y^{*}\in\varTheta$. Thus, the sequence $\{x^{k}\}_{k=0}^\infty$ converges
in norm to a solution of the problem \eqref{MSMSICFP}}. \end{proof}


\begin{remark} The above analysis holds for a special type of point-to-set
operators. Therefore, it is natural to ask whether it can be extended
for other or more general point-to-set operators? \end{remark}


\subsection{Bregman Projections and A Generalized Proximity Function}

Next we generalize the proximity function $g$ of \eqref{Zero=00003D00003D00003D00003D000020function}
or \eqref{Zero=00003D00003D00003D00003D000020function1} by replacing
 the orthogonal projections by Bregman projections. Bregman distances
and Bregman projections enjoy a large body of published works in the
mathematical literature and have been used in diverse fields such
as statistics, machine learning, information geometry and optimization,
to name but a few.

\soumitra{We denote by $\mathscr{B}(\mathbb{R}^{n})$ the set of
all Bregman functions $f:\mathbb{R}^{n}\rightarrow\mathbb{R}$ with
{\em zone} $\mathbb{R}^{n}$. We refer the reader to \cite[Chapter 2]{YCEN1997}
for a precise definition of such functions.}

For $f\in\mathscr{B}(\mathbb{R}^{n})$ the Bregman distance is given
by 
\begin{eqnarray}
\begin{aligned}\label{ineq:bregmain}D_{f}(y,x):=f(y)-f(x)-\left\langle \nabla f(x),y-x\right\rangle .\end{aligned}
\end{eqnarray}

Consider a nonempty, closed, and convex subset $C$ of $\mathbb{R}^{n}$.
Then the Bregman projection of $x$ onto $C$ with respect to $f\in\mathscr{B}(\mathbb{R}^{n})$
is given by 
\begin{eqnarray}
\begin{aligned}P^{f}_{C}(x):=\arg\min_{y\in C}D_{f}(y,x).\end{aligned}
\label{Breg:projection}
\end{eqnarray}

It is well-known that the Bregman projection exists, and that it is
unique when $f\in\mathscr{B}(\mathbb{R}^{n})$. We recall the following result for Bregman projections which is \cite[Theorem 2.4.2]{YCEN1997}
specialized for $f\in\mathscr{B}(\mathbb{R}^{n})$.

\begin{theorem}\label{Breg:Proj:Characterization} Let $f\in\mathscr{B}(\mathbb{R}^{n})$.
Then for any $x\in\mathbb{R}^{n}$, $x^{*}=P^{f}_{C}(x)$ if and only
if 
\begin{eqnarray}
\begin{aligned}\left\langle \nabla f(x)-\nabla f(x^{*}),y-x^{*}\right\rangle \leq0,\;\text{for all}\;y\in C.\end{aligned}
\end{eqnarray}
\end{theorem}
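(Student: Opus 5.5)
The plan is to treat $P^{f}_{C}(x)$ as the minimizer of the convex function $\varphi(y):=D_{f}(y,x)$ over $C$, and to obtain the inequality as the first-order optimality condition for this constrained problem. Since $f\in\mathscr{B}(\mathbb{R}^{n})$ has zone $\mathbb{R}^{n}$, the function $f$ is convex and differentiable on all of $\mathbb{R}^{n}$. With $x$ fixed, $\varphi(y)=f(y)-f(x)-\langle\nabla f(x),y-x\rangle$ is therefore convex and differentiable in $y$, and
\begin{equation*}
\nabla\varphi(y)=\nabla f(y)-\nabla f(x).
\end{equation*}
Existence and uniqueness of the minimizer are taken from the fact recalled just before the statement.

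\textbf{Necessity.} Let $x^{*}=P^{f}_{C}(x)$. For any $y\in C$ and $s\in(0,1]$, convexity of $C$ gives $x^{*}+s(y-x^{*})\in C$, so
\begin{equation*}
\frac{\varphi(x^{*}+s(y-x^{*}))-\varphi(x^{*})}{s}\geq0.
\end{equation*}
Letting $s\downarrow0$ and using differentiability of $f$ at $x^{*}$, we get $\langle\nabla f(x^{*})-\nabla f(x),y-x^{*}\rangle\geq0$. Multiplying by $-1$ gives the claimed inequality.

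\textbf{Sufficiency.} Suppose $x^{*}\in C$ satisfies the inequality for all $y\in C$. (Membership $x^{*}\in C$ is implicit in the statement, since $P^{f}_{C}(x)\in C$; I will state this explicitly.) Convexity of $\varphi$ gives, for each $y\in C$,
\begin{equation*}
\varphi(y)\geq\varphi(x^{*})+\langle\nabla\varphi(x^{*}),y-x^{*}\rangle=\varphi(x^{*})+\langle\nabla f(x^{*})-\nabla f(x),y-x^{*}\rangle\geq\varphi(x^{*}).
\end{equation*}
Hence $x^{*}$ minimizes $D_{f}(\cdot,x)$ over $C$, and by uniqueness $x^{*}=P^{f}_{C}(x)$.

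The only delicate point is the precise list of properties a Bregman function with zone $\mathbb{R}^{n}$ enjoys, namely convexity, differentiability everywhere, and strict convexity (which gives uniqueness). I will cite these from \cite[Chapter 2]{YCEN1997}. With them in hand, the argument reduces to the standard variational inequality characterization of minimizers of a differentiable convex function over a closed convex set. This is the same mechanism as the Euclidean characterization \eqref{proj:inq1}, which is the special case $f=\frac{1}{2}\|\cdot\|^{2}$.
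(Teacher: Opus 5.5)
Your proof is correct and complete. The paper gives no argument of its own; it simply imports the result as Theorem 2.4.2 of \cite{YCEN1997}, specialized to zone $\mathbb{R}^{n}$. Your first-order optimality argument for the strictly convex, differentiable function $D_f(\cdot,x)$ over $C$ is exactly the standard proof behind that citation.

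Your insistence on $x^{*}\in C$ is a genuine correction to the statement as printed, not a formality. The point $x^{*}=x$ always satisfies the inequality (with equality), yet $x\neq P^{f}_{C}(x)$ whenever $x\notin C$.
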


Now 
\begin{eqnarray}
\begin{aligned}D_{f}(P^{f}_{C}(x),x)=f(P^{f}_{C}(x))-f(x)-\left\langle \nabla f(x),P^{f}_{C}(x)-x\right\rangle .\end{aligned}
\label{Breg:ineq1}
\end{eqnarray}

\soumitra{ \begin{lemma}\label{lem1:Breg:gradient} Let $f\in\mathscr{B}(\mathbb{R}^{n})$,
and let $C$ be a nonempty, closed, and convex subset of $\mathbb{R}^{n}$.
Suppose that $f$ is twice continuously differentiable, that $D_{f}(P^{f}_{C}(\cdot),\cdot)$
is a convex function, and that for every hyperplane $H$ the derivatives
\begin{eqnarray}
\frac{\partial}{\partial x_{q}}(P^{f}_{H}(x))_{p},\;\text{for}\;1\leq p,q\leq n,\label{ineq1:Breg:gradient}
\end{eqnarray}
exist and are continuous for all $x\in\mathbb{R}^{n}$, then $D_{f}(P^{f}_{C}(\cdot),\cdot)$
is a continuously differentiable function and 
\begin{eqnarray}
\begin{aligned}\label{Breg:gradient}\nabla D_{f}(P^{f}_{C}(x),x)=\nabla^{2}f(x)(x-P^{f}_{C}(x)),\;\text{for all}\;x\in\mathbb{R}^{n}.\end{aligned}
\end{eqnarray}
\end{lemma}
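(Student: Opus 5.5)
Write $\varphi(x):=D_{f}(P^{f}_{C}(x),x)$ and $p(x):=P^{f}_{C}(x)$. By \eqref{Breg:ineq1},
\[
\varphi(x)=f(p(x))-f(x)-\langle\nabla f(x),p(x)-x\rangle .
\]
If $p$ were differentiable with Jacobian $J_p(x)$, the chain rule would give
\[
\nabla\varphi(x)=J_p(x)^{T}\big(\nabla f(p(x))-\nabla f(x)\big)-\nabla f(x)-J_p(x)^{T}\nabla f(x)+\nabla f(x)-\nabla^{2}f(x)\big(p(x)-x\big).
\]
This simplifies to
\[
\nabla\varphi(x)=J_p(x)^{T}\big(\nabla f(p(x))-\nabla f(x)\big)+\nabla^{2}f(x)\big(x-p(x)\big).
\]
So the whole task is to show that the term $J_p(x)^{T}(\nabla f(p(x))-\nabla f(x))$ vanishes, and to make this rigorous even though $p$ need not be differentiable for a general $C$. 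I follow the classical route of Censor--Elfving--Kopf--Bortfeld for the multiple-sets split feasibility problem: reduce to hyperplanes by a variational (sandwich) argument, and use the convexity hypothesis to pass from directional information to differentiability.

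\textbf{Step 1 (hyperplane case).} Let $H$ be a hyperplane. By hypothesis \eqref{ineq1:Breg:gradient}, $P^{f}_{H}$ is $C^{1}$, so the chain rule computation above is valid. By Theorem \ref{Breg:Proj:Characterization} applied to $H$, the vector $\nabla f(x)-\nabla f(P^{f}_{H}(x))$ is orthogonal to $H-P^{f}_{H}(x)$, since the inequality holds for $\pm$ directions in the affine set. Because $P^{f}_{H}(x)\in H$ for all $x$, the columns of $J_{P^{f}_{H}}(x)$ lie in the direction space of $H$. Hence $J^{T}(\nabla f(p)-\nabla f(x))=0$, giving \eqref{Breg:gradient} for $H$.

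\textbf{Step 2 (general $C$, via supporting hyperplanes).} Fix $x$. If $x\in C$, then $\varphi\ge0=\varphi(x)$, so $x$ is a minimizer; if the limiting argument below gives differentiability, the gradient is $0=\nabla^{2}f(x)(x-x)$. If $x\notin C$, set $x^{*}=p(x)$ and let $H$ be the hyperplane through $x^{*}$ with normal $\nabla f(x)-\nabla f(x^{*})$ (this normal is nonzero by strict convexity of $f$). By Theorem \ref{Breg:Proj:Characterization}, $C$ lies in the half-space $H^{-}$ and $P^{f}_{H}(x)=x^{*}$.

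For every $z$, the point $P^{f}_{C}(z)$ lies in $C\subseteq H^{-}$. The key claim is
\[
D_{f}(P^{f}_{H^{-}}(z),z)\le\varphi(z)\quad\text{for all }z,
\]
with equality at $z=x$, where $P^{f}_{H^{-}}$ coincides with $P^{f}_{H}$ near $x$ because $x\notin H^{-}$. This is just minimality over the larger set $H^{-}\supseteq C$. So $\psi(z):=D_{f}(P^{f}_{H}(z),z)$ is a $C^{1}$ minorant of the convex function $\varphi$ near $x$, touching it at $x$. Hence $\nabla\psi(x)=\nabla^{2}f(x)(x-x^{*})$ belongs to $\partial\varphi(x)$.

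\textbf{Step 3 (uniqueness of the subgradient and continuity).} This is the main obstacle: a lower support only yields a subgradient, and I also need an upper bound. I would get it from
\[
\varphi(z)\le D_{f}(x^{*},z)=f(x^{*})-f(z)-\langle\nabla f(z),x^{*}-z\rangle,
\]
which holds because $x^{*}\in C$. The right side is $C^{1}$ in $z$ since $f\in C^{2}$, equals $\varphi(x)$ at $z=x$, and has gradient $\nabla^{2}f(x)(x-x^{*})$ at $x$. So $\varphi$ is squeezed between two $C^{1}$ functions that touch it at $x$ with the same gradient. Hence $\varphi$ is differentiable at $x$ with gradient $\nabla^{2}f(x)(x-P^{f}_{C}(x))$. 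This upper-bound argument also handles the case $x\in C$ directly, and it largely makes Step 2 a consistency check; I would keep both in case the upper bound needs care.

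Finally, continuity of the gradient follows because $\nabla^{2}f$ is continuous and $P^{f}_{C}$ is continuous; the latter is a standard property of Bregman projections for $f\in\mathscr{B}(\mathbb{R}^{n})$ from \cite{YCEN1997}. Alternatively, a differentiable convex function on $\mathbb{R}^{n}$ is automatically $C^{1}$, which is where the convexity hypothesis enters.
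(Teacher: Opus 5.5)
Your proof is correct, but it does not follow the paper. The paper does no computation. It substitutes $S=\mathbb{R}^{n}$ and $\varOmega=C$ into \cite[Proposition 12]{YCEN2004}, checks only that $C\cap\mathrm{cl}(S)\neq\varnothing$ and that $f$ is zone consistent with respect to $C$, and takes the convexity and hyperplane hypotheses over from that source.

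Your argument is self-contained, and its decisive step is the majorant $\Psi(z)=D_{f}(P^{f}_{C}(x),z)$ of Step 3. It is $C^{1}$ because $f\in C^{2}$, it touches $\varphi$ at $x$, and $\nabla\Psi(x)=\nabla^{2}f(x)(x-P^{f}_{C}(x))$. Together with convexity, this alone finishes the proof. For any $s\in\partial\varphi(x)$ you have $\Psi(z)\geq\varphi(z)\geq\Psi(x)+\langle s,z-x\rangle$, so $z\mapsto\Psi(z)-\langle s,z\rangle$ is minimized at $x$, which forces $s=\nabla\Psi(x)$. Hence $\partial\varphi(x)$ is a singleton at every $x$, and a convex function differentiable on $\mathbb{R}^{n}$ is automatically $C^{1}$. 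So Steps 1--2, and with them the hypothesis on $P^{f}_{H}$, are not needed. Conversely, the two-sided squeeze of Steps 1--3 gives differentiability without using convexity; continuity of the gradient then follows from continuity of $P^{f}_{C}$. Your route makes the mechanism visible and shows the lemma's hypotheses are stronger than necessary; the paper's route buys only brevity.

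There are two small slips. First, in the unsimplified chain-rule formula the first term should be $J_{p}(x)^{T}\nabla f(p(x))$; as written, $-J_{p}(x)^{T}\nabla f(x)$ is counted twice. Your simplified formula is nevertheless correct. Second, the claim that $P^{f}_{H^{-}}=P^{f}_{H}$ near $x$ needs a one-line justification. A minimizer $w$ of $D_{f}(\cdot,z)$ lying in the interior of $H^{-}$ would satisfy $\nabla f(w)=\nabla f(z)$, which forces $w=z\notin H^{-}$, a contradiction.
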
 \begin{proof} Substitute $S=\mathbb{R}^{n}$, $\varOmega=C$,
and $d^{f}_{C}(\cdot)=D_{f}(P^{f}_{C}(\cdot),\cdot)$ in \cite[Proposition 12]{YCEN2004}.
Then $C\cap\text{cl}(S)\neq\varnothing$ since $C\neq\varnothing$. Moreover,
it is clear that $f$ is zone consistent with respect to $C$, that
is, $x\in\mathbb{R}^{n}$ implies $P^{f}_{C}(x)\in\mathbb{R}^{n}$.
Thus, all the conditions of \cite[Proposition 12]{YCEN2004} are satisfied
and the result follows. \end{proof} }

\soumitra{ \begin{remark} Let $f\in\mathscr{B}(\mathbb{R}^{n})$,
and let $C$ be a nonempty, closed, and convex subset of $\mathbb{R}^{n}$.
Then $D_{f}(P^{f}_{C}(\cdot),\cdot)$ may not be a convex function.
For example, let $C=\{0\}\subset\mathbb{R}$ and $f(x)=(x^{2}/60)+e^{-x}$.
It is clear that $f$ is a strongly convex function with constant
$1/30$. Hence, $D_{f}(y,x)\geq\parallel y-x\parallel{}^{2}/60$ ,
for any $x,y\in\mathbb{R}$. Therefore, $f$ is a Bregman function
with zone $\mathbb{R}$. Since $C=\{0\}$, $P^{f}_{C}(x)=0$ for all
$x\in\mathbb{R}$. Thus, $D_{f}(0,x)=f(0)-f(x)+xf^{\prime}(x)=1+(x^{2}/60)-(1+x)e^{-x}$.
Now, $D^{\prime}_{f}(0,x)=(x/30)+xe^{-x}$ and $D^{\prime\prime}_{f}(0,x)=(1/30)+(1-x)e^{-x}$.
This implies that $D^{\prime\prime}_{f}(0,2)=(1/30)-e^{-2}=-0.1020<0$.
As a result, $D_{f}(P^{f}_{C}(\cdot),\cdot)$ is not convex. \end{remark}
}

\soumitra{Unless otherwise stated, by $g_{j}\in\mathscr{B}(\mathbb{R}^{m_{j}})$,
we mean that $g_{j}:\mathbb{R}^{m_{j}}\rightarrow\mathbb{R}$ is a
Bregman function with zone $\mathbb{R}^{m_{j}}$, $j=1,2,\dots,t$.
We denote $g:=(g_{j})^{t}_{j=1}=(g_{1},g_{2},\dots,g_{t}).$}

\subsubsection{Our generalized proximity function}

\label{subsec:Our-Bregman-proximity}

Let $f\in\mathscr{B}(\mathbb{R}^{n})$ and $g_{j}\in\mathscr{B}(\mathbb{R}^{m_{j}})$
for $j=1,2,\dots,t$. By replacing the orthogonal projections by Bregman
projections we define the function $p_{f,g}:\mathbb{R}^{n}\rightarrow\mathbb{R}$
by 
\begin{eqnarray}
\begin{aligned}p_{f,g}(x):=\sum^{r}_{i=1}\alpha_{i}D_{f}(P^{f}_{C_{i}(x)}(x),x)+\sum^{t}_{j=1}\soumitra{\sum^{\ell_{j}}_{h=1}}\beta^{j}_{h}D_{g_{j}}\bigg(P^{g_{j}}_{Q^{j}_{h}(A_{j}x)}(A_{j}x),A_{j}x\bigg),\;x\in\mathbb{R}^{n},\end{aligned}
\label{Breg:ineq2}
\end{eqnarray}
where $\alpha_{i}>0,\beta^{j}_{h}>0$ are parameters, and $C_{i},Q^{j}_{h}$
are the point-to-set operators defined above for $h,j,i$.
This is our generalized proximity function.

\begin{remark} \soumitra{It is important to note that, in general,
a single Bregman function cannot be used to define the relevant Bregman
distances in all spaces arising in a split problem. Indeed, the image
of $x\in\mathbb{R}^{n}$ under the bounded linear operator $A_{j}$
belongs to $\mathbb{R}^{m_{j}}$ for $j=1,2,\dots,t$. Due to this
restriction, we employ a Bregman function $f\in\mathscr{B}(\mathbb{R}^{n})$
for nonempty, closed, and convex sets $C_{i}$ in the domain space
and, for each $j=1,2,\dots,t$, a Bregman function $g_{j}\in\mathscr{B}(\mathbb{R}^{m_{j}})$
for nonempty, closed, and convex sets $Q^{j}_{h}$ for all $h=1,2,\dots,\ell_{j}$,
in the corresponding image space. Thus, the Bregman distances and
the Bregman projections are defined in the appropriate underlying
Euclidean spaces.} \end{remark}

\begin{proposition}\label{Breg:proposition1} \soumitra{Let $f\in\mathscr{B}(\mathbb{R}^{n})$
and $g_{j}\in\mathscr{B}(\mathbb{R}^{m_{j}})$ for $j=1,2,\dots,t$.}
The function $p_{f,g}:\mathbb{R}^{n}\rightarrow\mathbb{R}$ of \eqref{Breg:ineq2}
is a generalized proximity function for the problem \eqref{MSMSICFP}.
\end{proposition}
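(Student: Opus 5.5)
We need to verify the two properties $(P_1)$ and $(P_2)$ of Definition \ref{Def:1} for $p_{f,g}$, mirroring Propositions \ref{proposition1} and \ref{proposition2}. The basic tool is the standard fact about Bregman functions with zone $\mathbb{R}^{n}$: $f$ is strictly convex and differentiable on $\mathbb{R}^{n}$, so by the gradient inequality $D_{f}(y,x)=f(y)-f(x)-\langle\nabla f(x),y-x\rangle\geq0$ for all $x,y$, with equality if and only if $y=x$. The same holds for each $g_{j}$ on $\mathbb{R}^{m_{j}}$. Well-definedness of $p_{f,g}$ follows from the existence and uniqueness of Bregman projections onto the nonempty, closed, and convex sets $C_{i}(x)$ and $Q^{j}_{h}(A_{j}x)$, as recalled before Theorem \ref{Breg:Proj:Characterization}.

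For $(P_1)$: every summand of \eqref{Breg:ineq2} is a positive weight $\alpha_{i}$ or $\beta^{j}_{h}$ times a Bregman distance. Each Bregman distance is nonnegative, so $p_{f,g}(x)\geq0$ for every $x\in\mathbb{R}^{n}$.

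For $(P_2)$, suppose first that $x^{*}$ solves \eqref{MSMSICFP}. Then $x^{*}\in C_{i}(x^{*})$, and since $D_{f}(\cdot,x^{*})$ attains its minimum value $0$ over $C_{i}(x^{*})$ at $y=x^{*}$, uniqueness of the Bregman projection gives $P^{f}_{C_{i}(x^{*})}(x^{*})=x^{*}$. Hence $D_{f}(P^{f}_{C_{i}(x^{*})}(x^{*}),x^{*})=D_{f}(x^{*},x^{*})=0$. The same argument applied to $A_{j}x^{*}\in Q^{j}_{h}(A_{j}x^{*})$ with $g_{j}$ shows that every image-space term also vanishes, so $p_{f,g}(x^{*})=0$.

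Conversely, suppose $p_{f,g}(x^{*})=0$. Since this is a sum of nonnegative terms with positive weights, each term is zero. In particular $D_{f}(P^{f}_{C_{i}(x^{*})}(x^{*}),x^{*})=0$, and strict convexity forces $P^{f}_{C_{i}(x^{*})}(x^{*})=x^{*}$. As the projection lies in $C_{i}(x^{*})$, we get $x^{*}\in C_{i}(x^{*})$ for all $i$. Likewise $P^{g_{j}}_{Q^{j}_{h}(A_{j}x^{*})}(A_{j}x^{*})=A_{j}x^{*}$, so $A_{j}x^{*}\in Q^{j}_{h}(A_{j}x^{*})$ for all $h$ and $j$. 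Thus $x^{*}$ solves \eqref{MSMSICFP}.

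The only point requiring care, and so the main obstacle, is the equivalence $D_{f}(y,x)=0\iff y=x$. I would justify it from the definition of Bregman functions in \cite[Chapter 2]{YCEN1997}: there $f$ is strictly convex on the zone, and strict convexity makes the gradient inequality strict whenever $y\neq x$.
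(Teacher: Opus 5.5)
Your proposal is correct and follows essentially the same route as the paper: nonnegativity term by term for $(P_1)$, then both directions of $(P_2)$ via the fact that $x^{*}$ is fixed by the relevant Bregman projections exactly when $x^{*}\in C_{i}(x^{*})$ and $A_{j}x^{*}\in Q^{j}_{h}(A_{j}x^{*})$. You also make explicit the step the paper leaves implicit, namely that $D_{f}(y,x)=0$ forces $y=x$ by strict convexity of a Bregman function. For the forward direction you argue from the minimization definition rather than the variational characterization in Theorem \ref{Breg:Proj:Characterization}, and that difference is immaterial.
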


\begin{proof} By definition $p_{f,g}(x)$ is non-negative for all
$x\in\mathbb{R}^{n}$. It remains to show that the function $p_{f,g}$
satisfies the property $(P_{2})$ of Definition \ref{Def:1}. Indeed,
let $x^{*}\in\mathbb{R}^{n}$ be a solution of the problem \eqref{MSMSICFP}.
Then using the Bregman projection characterization it is straightforward
to verify that 
\begin{eqnarray}
\begin{aligned}\label{Breg:por1:ineq1}x^{*}=P^{f}_{C_{i}(x^{*})}(x^{*})\text{ and }A_{j}x^{*}=P^{g_{j}}_{Q^{j}_{h}(A_{j}x^{*})}(A_{j}x^{*}),\;\mathrm{for\;all\;}h,j,i.\end{aligned}
\end{eqnarray}

This implies that $p_{f,g}(x^{*})=0$.

Conversely, suppose $x^{*}\in\mathbb{R}^{n}$ solves the equation
$p_{f,g}(x)=0$. This implies that 
\begin{eqnarray}
\begin{aligned}\sum^{r}_{i=1}\alpha_{i}D_{f}(P^{f}_{C_{i}(x^{*})}(x^{*}),x^{*})+\sum^{t}_{j=1}\soumitra{\sum^{\ell_{j}}_{h=1}}\beta^{j}_{h}D_{g_{j}}\bigg(P^{g_{j}}_{Q^{j}_{h}(A_{j}x^{*})}(A_{j}x^{*}),A_{j}x^{*}\bigg)=0.\end{aligned}
\label{Breg:pro1:ineq2}
\end{eqnarray}

\begin{eqnarray}
\begin{aligned}\sum^{r}_{i=1}\alpha_{i}D_{f}(P^{f}_{C_{i}(x^{*})}(x^{*}),x^{*})=0\;\text{and}\;\sum^{t}_{j=1}\soumitra{\sum^{\ell_{j}}_{h=1}}\beta^{j}_{h}D_{g_{j}}\bigg(P^{g_{j}}_{Q^{j}_{h}(A_{j}x^{*})}(A_{j}x^{*}),A_{j}x^{*}\bigg)=0.\end{aligned}
\label{Breg:pro1:ineq3}
\end{eqnarray}

Consequently, \soumitra{
\begin{eqnarray}
\begin{aligned} & \alpha_{i}D_{f}(P^{f}_{C_{i}(x^{*})}(x^{*}),x^{*})=0,\;\mathrm{for\;all\;}i=1,2,\dots,r,\;\text{and}\\
 & \sum^{\ell_{j}}_{h=1}\beta^{j}_{h}D_{g_{j}}\Big(P^{g_{j}}_{Q^{j}_{h}(A_{j}x^{*})}(A_{j}x^{*}),A_{j}x^{*}\Big)=0,\;\mathrm{for\;all\;}j=1,2,\dots,t.
\end{aligned}
\label{Breg:pro1:ineq4}
\end{eqnarray}
}

Therefore, 
\begin{eqnarray}
\begin{aligned}x^{*}\in\bigcap^{r}_{i=1}C_{i}(x^{*})\text{ and }A_{j}x^{*}\in\bigcap^{\ell_{j}}_{h=1}Q^{j}_{h}(A_{j}x^{*}),\;\text{for each }j=1,2,\dots,t.\end{aligned}
\label{Breg:pro1:ineq5}
\end{eqnarray}
Thus, $x^{*}$ is a solution of the problem \eqref{MSMSICFP}.\end{proof}

\begin{remark}\label{Breg:rmk} The analysis presented in the previous
sections cannot be directly extended to the general case in the Bregman
setting, since it is not well established whether Lemma \ref{Lemma4}
holds for Bregman projections. However, we do make some progress for
a special case in the next subsection.\end{remark}

Various types of Bregman distances and proximity functions were considered
in different contexts in the literature. See, e.g., \cite{CBYR2001,HHBA2006,RSBU2021}.

\bigskip{}

\subsubsection{The split feasibility problem with multiple output sets}

\label{subsec:The-split-feasibility}

An important special case of \eqref{MSMSICFP} is \label{Sec:splcas}
the split feasibility problem with multiple output sets, which was
considered in \cite[Subsection 4.1]{SREI2022}. We derive here some
generalized results about this case by using the generalized proximity
function.

Let for all $x\in\mathbb{R}^{n}$, the point-to-set operator $C_{i}(x)=C_{i}$,
$i=1,2,\dots,r$, be a nonempty, closed, and convex subset of $\mathbb{R}^{n}$.
Let for all $y\in\mathbb{R}^{m_{j}}$, the point-to-set operator $Q^{j}_{h}(y)=Q^{j}_{h}$
be a nonempty, closed, and convex subset of $\mathbb{R}^{m_{j}}$
for $h=1,2,\dots,\ell_{j}$, $j=1,2,\dots,t$. Let $A_{j}:\mathbb{R}^{n}\rightarrow\mathbb{R}^{m_{j}}$
be a bounded linear operator for each $j=1,2,\dots,t$. Then the problem
\eqref{MSMSICFP} consists of finding a point $x^{*}\in\mathbb{R}^{n}$
such that 
\begin{align}
x^{*}\in\bigcap\limits^{r}_{i=1}C_{i}\;\text{ and }\;A_{j}x^{*}\in\bigcap\limits^{\ell_{j}}_{h=1}Q^{j}_{h},\;\text{for each }j=1,2,\dots,t.\tag{SFPMOS}\label{SFPMOS}
\end{align}
The problem \eqref{SFPMOS} was studied in \cite[Subsection 4.1]{SREI2022}
for infinite-dimensional Hilbert spaces.

\begin{corollary}\label{Bregman:proposition2} \soumitra{The function
$p:\mathbb{R}^{n}\rightarrow\mathbb{R}$, defined by 
\begin{equation}
p(x):=\frac{1}{2}\sum^{r}_{i=1}\alpha_{i}\|x-P_{C_{i}}(x)\|^{2}+\frac{1}{2}\sum^{t}_{j=1}\sum^{\ell_{j}}_{h=1}\beta^{j}_{h}\|A_{j}x-P_{Q^{j}_{h}}(A_{j}x)\|^{2},\;x\in\mathbb{R}^{n},\label{ineq1:Bregman:proposition2:Zerofunction}
\end{equation}
is a proximity function for the problem \eqref{SFPMOS}. } \end{corollary}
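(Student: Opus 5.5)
The plan is to obtain the claim as a direct specialization of Proposition \ref{proposition2}, or equivalently of Proposition \ref{Breg:proposition1} with the Euclidean Bregman functions. The identification is as follows. Take the constant point-to-set operators $C_{i}(x):=C_{i}$ for all $x\in\mathbb{R}^{n}$ and $Q^{j}_{h}(y):=Q^{j}_{h}$ for all $y\in\mathbb{R}^{m_{j}}$. Each of these has nonempty, closed, and convex values, so the standing hypotheses of Subsection \ref{subsec:1} hold.

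With this identification, the MSMS-ICFP formulated in \eqref{MSMSICFP} is literally the problem \eqref{SFPMOS}. Likewise, $P_{C_{i}(x)}(x)=P_{C_{i}}(x)$ and $P_{Q^{j}_{h}(A_{j}x)}(A_{j}x)=P_{Q^{j}_{h}}(A_{j}x)$. Hence the function $g$ of \eqref{Zero=00003D00003D00003D00003D000020function} coincides with $p$ of \eqref{ineq1:Bregman:proposition2:Zerofunction}. Proposition \ref{proposition2} then gives that $p$ is a proximity function for \eqref{SFPMOS}.

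Alternatively, the same conclusion can be read off from Proposition \ref{Breg:proposition1}. Choose $f(x)=\frac{1}{2}\|x\|^{2}\in\mathscr{B}(\mathbb{R}^{n})$ and $g_{j}(y)=\frac{1}{2}\|y\|^{2}\in\mathscr{B}(\mathbb{R}^{m_{j}})$. Then $D_{f}(y,x)=\frac{1}{2}\|y-x\|^{2}$, and the Bregman projection reduces to the orthogonal projection, so $p_{f,g}=p$.

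For completeness, I would also spell out the two properties directly. Property $(P_{1})$ holds because $p$ is a positive combination of squared norms. For $(P_{2})$, if $x^{*}$ solves \eqref{SFPMOS}, then Remark \ref{rmk:proj}(3) gives $P_{C_{i}}(x^{*})=x^{*}$ and $P_{Q^{j}_{h}}(A_{j}x^{*})=A_{j}x^{*}$, so $p(x^{*})=0$. Conversely, if $p(x^{*})=0$, then since every $\alpha_{i}$ and $\beta^{j}_{h}$ is positive, each summand vanishes. Thus $x^{*}=P_{C_{i}}(x^{*})\in C_{i}$ and $A_{j}x^{*}=P_{Q^{j}_{h}}(A_{j}x^{*})\in Q^{j}_{h}$ for every $i$, $j$, $h$.

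There is no real obstacle here. The only point needing care is the bookkeeping: checking that constant operators meet the hypotheses of the earlier propositions, and that the fixed-point reformulation $\text{Fix}(C_{i})=C_{i}$ matches \eqref{SFPMOS}.
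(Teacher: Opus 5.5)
Your proposal is correct and takes the same approach as the paper, whose proof says only that the argument is analogous to Proposition~\ref{proposition2}. Taking the constant operators $C_{i}(x)\equiv C_{i}$ and $Q^{j}_{h}(y)\equiv Q^{j}_{h}$ makes $g$ coincide with $p$, so the corollary becomes an exact specialization of that proposition, and your direct check of $(P_{1})$ and $(P_{2})$ is that argument written out (the Bregman route with $f=\frac{1}{2}\|\cdot\|^{2}$ also works but is not needed).
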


\begin{proof} \soumitra{The proof is analogous to Proposition
\ref{proposition2}.} \end{proof}

\medskip{}
 Let $f\in\mathscr{B}(\mathbb{R}^{n})$ and $g_{j}\in\mathscr{B}(\mathbb{R}^{m_{j}})$
for $j=1,2,\dots,t$. Define a function $p_{f,g}:\mathbb{R}^{n}\rightarrow\mathbb{R}$
by 
\begin{eqnarray}
\begin{aligned}p_{f,g}(x):=\sum^{r}_{i=1}\alpha_{i}D_{f}(P^{f}_{C_{i}}(x),x)+\sum^{t}_{j=1}\soumitra{\sum^{\ell_{j}}_{h=1}}\beta^{j}_{h}D_{g_{j}}\bigg(P^{g_{j}}_{Q^{j}_{h}}(A_{j}x),A_{j}x\bigg),\;x\in\mathbb{R}^{n}.\end{aligned}
\label{Breg:SFPMO:ineq1}
\end{eqnarray}

We have the following result.

\begin{corollary}\label{prox:SFPMO} \soumitra{Let $f\in\mathscr{B}(\mathbb{R}^{n})$
and $g_{j}\in\mathscr{B}(\mathbb{R}^{m_{j}})$ for $j=1,2,\dots,t$.
The function $p_{f,g}:\mathbb{R}^{n}\rightarrow\mathbb{R}$ defined
above in \eqref{Breg:SFPMO:ineq1} is a generalized proximity function
for the problem \eqref{SFPMOS}.} \end{corollary}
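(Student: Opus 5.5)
The plan is to obtain Corollary \ref{prox:SFPMO} as the special case of Proposition \ref{Breg:proposition1} in which all point-to-set operators are constant. Take $C_{i}(x):=C_{i}$ for every $x\in\mathbb{R}^{n}$ and $Q^{j}_{h}(y):=Q^{j}_{h}$ for every $y\in\mathbb{R}^{m_{j}}$. These operators are admissible in the general framework, since each value is nonempty, closed, and convex. With this choice, the function $p_{f,g}$ of \eqref{Breg:ineq2} coincides term by term with the function of \eqref{Breg:SFPMO:ineq1}, and \eqref{MSMSICFP} coincides with \eqref{SFPMOS}. Proposition \ref{Breg:proposition1} then yields $(P_{1})$ and $(P_{2})$ of Definition \ref{Def:1} directly.

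To make the argument self-contained, I would also verify the two properties directly.

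\emph{Property $(P_{1})$.} Each $D_{f}(y,x)\geq0$ and each $D_{g_{j}}(y,x)\geq0$ by convexity of the Bregman functions, and the weights $\alpha_{i},\beta^{j}_{h}$ are positive. Hence $p_{f,g}\geq0$.

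\emph{Forward direction of $(P_{2})$.} Let $x^{*}$ solve \eqref{SFPMOS}. Since $x^{*}\in C_{i}$ and $D_{f}(x^{*},x^{*})=0$ is the minimal possible value, uniqueness of the Bregman projection gives $P^{f}_{C_{i}}(x^{*})=x^{*}$. In the same way, $P^{g_{j}}_{Q^{j}_{h}}(A_{j}x^{*})=A_{j}x^{*}$. Every summand therefore vanishes, so $p_{f,g}(x^{*})=0$.

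\emph{Converse of $(P_{2})$.} Suppose $p_{f,g}(x^{*})=0$. A sum of nonnegative terms with positive weights vanishes only if each term vanishes, so $D_{f}(P^{f}_{C_{i}}(x^{*}),x^{*})=0$ for every $i$ and $D_{g_{j}}(P^{g_{j}}_{Q^{j}_{h}}(A_{j}x^{*}),A_{j}x^{*})=0$ for every $j,h$.

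The main obstacle is passing from $D_{f}(y,x)=0$ to $y=x$. This follows from the strict convexity of Bregman functions, which is part of the definition in \cite[Chapter 2]{YCEN1997}. Indeed, strict convexity gives $f(y)>f(x)+\langle\nabla f(x),y-x\rangle$ whenever $y\neq x$, so $D_{f}(y,x)>0$ for $y\neq x$. Applying this, we get $x^{*}=P^{f}_{C_{i}}(x^{*})\in C_{i}$ and $A_{j}x^{*}=P^{g_{j}}_{Q^{j}_{h}}(A_{j}x^{*})\in Q^{j}_{h}$ for all indices. Hence $x^{*}$ solves \eqref{SFPMOS}.
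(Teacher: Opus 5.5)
Your proposal is correct and takes the paper's route: the paper's proof just says the argument is analogous to that of Proposition~\ref{Breg:proposition1}, which amounts to specializing to the constant operators $C_{i}(x)\equiv C_{i}$ and $Q^{j}_{h}(y)\equiv Q^{j}_{h}$, exactly as you do. Your direct verification also spells out the step $D_{f}(y,x)=0\Rightarrow y=x$ via strict convexity of Bregman functions, which the paper's proof of Proposition~\ref{Breg:proposition1} leaves implicit.
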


\begin{proof} The proof is analogous to Proposition \ref{Breg:proposition1}.
\end{proof}

\begin{lemma}\label{Breg:lem:BregProj} \soumitra{Let $f\in\mathscr{B}(\mathbb{R}^{n})$
and $C$ be a nonempty, closed, and convex subset of $\mathbb{R}^{n}$.
Additionally, assume that the function $f$ is $\sigma$-strongly
convex and $\nabla f$ is $L$-Lipschitz continuous. Then 
\begin{eqnarray}
\begin{aligned}\|P^{f}_{C}(x)-P^{f}_{C}(y)\|\leq\frac{L}{\sigma}\|x-y\|,\;\text{for all}\;x,y\in\mathbb{R}^{n}.\end{aligned}
\end{eqnarray}
} \end{lemma}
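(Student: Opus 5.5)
Set $x^{*}:=P^{f}_{C}(x)$ and $y^{*}:=P^{f}_{C}(y)$. The plan is to use the variational characterization of Bregman projections (Theorem \ref{Breg:Proj:Characterization}) twice, each time tested at the other projection point. Adding the two inequalities, the strong monotonicity of $\nabla f$ from Lemma \ref{lemma:strongconvex} will control $\|x^{*}-y^{*}\|^{2}$ from below. The Lipschitz continuity of $\nabla f$ will control the remaining cross term from above.

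\textbf{Step 1.} Since $y^{*}\in C$, Theorem \ref{Breg:Proj:Characterization} applied to $x^{*}$ gives
\begin{equation*}
\left\langle \nabla f(x)-\nabla f(x^{*}),y^{*}-x^{*}\right\rangle \leq0 .
\end{equation*}
Since $x^{*}\in C$, the same theorem applied to $y^{*}$ gives
\begin{equation*}
\left\langle \nabla f(y)-\nabla f(y^{*}),x^{*}-y^{*}\right\rangle \leq0 .
\end{equation*}

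\textbf{Step 2.} Add the two inequalities and rearrange so that the gradient differences at the projections are grouped together. This yields
\begin{equation*}
\left\langle \nabla f(x^{*})-\nabla f(y^{*}),x^{*}-y^{*}\right\rangle \leq\left\langle \nabla f(x)-\nabla f(y),x^{*}-y^{*}\right\rangle .
\end{equation*}
The sign bookkeeping in this rearrangement is the only delicate point, so I would expand each term explicitly rather than combine them mentally.

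\textbf{Step 3.} Bound the two sides separately.
\begin{itemize}
\item On the left, $f$ is $\sigma$-strongly convex and differentiable, so Lemma \ref{lemma:strongconvex} shows the left side is at least $\sigma\|x^{*}-y^{*}\|^{2}$.
\item On the right, Cauchy--Schwarz and the $L$-Lipschitz continuity of $\nabla f$ show the right side is at most $L\|x-y\|\,\|x^{*}-y^{*}\|$.
\end{itemize}

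\textbf{Step 4.} Combining gives $\sigma\|x^{*}-y^{*}\|^{2}\leq L\|x-y\|\,\|x^{*}-y^{*}\|$. If $x^{*}=y^{*}$ the claim is trivial. Otherwise, divide by $\sigma\|x^{*}-y^{*}\|>0$ to obtain the stated bound with constant $L/\sigma$.

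No real obstacle is expected. Besides the sign check in Step 2, the only thing to confirm is that Theorem \ref{Breg:Proj:Characterization} applies, with $\nabla f$ defined on all of $\mathbb{R}^{n}$. This holds because the zone of $f$ is $\mathbb{R}^{n}$ and $f\in\mathscr{B}(\mathbb{R}^{n})$.
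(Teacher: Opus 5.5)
Your proposal is correct and follows essentially the same route as the paper's proof: apply the Bregman projection characterization at each projection point tested at the other, add to get the monotonicity-type inequality, bound below by $\sigma$-strong monotonicity and above by Cauchy--Schwarz with $L$-Lipschitz continuity of $\nabla f$, then divide (handling $P^{f}_{C}(x)=P^{f}_{C}(y)$ separately). Your sign bookkeeping in Step 2 is also correct.
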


\begin{proof} 
\soumitra{Since $f\in\mathscr{B}(\mathbb{R}^{n})$, $P^{f}_{C}(x)$
exists and is unique for each $x\in\mathbb{R}^{n}$. For $x,y\in\mathbb{R}^{n}$,
by Theorem \ref{Breg:Proj:Characterization}, we get 
\begin{eqnarray}
\left\langle \nabla f(x)-\nabla f(P^{f}_{C}(x)),P^{f}_{C}(y)-P^{f}_{C}(x)\right\rangle \leq0\label{ineq1:Breg:Proj:Characterization}
\end{eqnarray}
and 
\begin{eqnarray}
\left\langle \nabla f(y)-\nabla f(P^{f}_{C}(y)),P^{f}_{C}(x)-P^{f}_{C}(y)\right\rangle \leq0.\label{ineq2:Breg:Proj:Characterization}
\end{eqnarray}
Adding \eqref{ineq1:Breg:Proj:Characterization} and \eqref{ineq2:Breg:Proj:Characterization},
we obtain 
\begin{eqnarray}
\left\langle \nabla f(P^{f}_{C}(y))-\nabla f(P^{f}_{C}(x)),P^{f}_{C}(y)-P^{f}_{C}(x)\right\rangle \leq\left\langle \nabla f(y)-\nabla f(x),P^{f}_{C}(y)-P^{f}_{C}(x)\right\rangle .\label{ineq3:Breg:Proj:Characterization}
\end{eqnarray}
Using the facts that $f$ is continuously differentiable and $\sigma$-strongly
convex, and $\nabla f$ is $L$-Lipschitz continuous, from \eqref{ineq3:Breg:Proj:Characterization}
we get 
\begin{eqnarray}
\sigma\|P^{f}_{C}(y)-P^{f}_{C}(x)\|^{2}\leq L\|y-x\|\|P^{f}_{C}(y)-P^{f}_{C}(x)\|.\label{ineq4:Breg:Proj:Characterization}
\end{eqnarray}
If $P^{f}_{C}(y)=P^{f}_{C}(x)$, the result is immediate. Otherwise,
dividing by $\|P^{f}_{C}(y)-P^{f}_{C}(x)\|$ both sides of \eqref{ineq4:Breg:Proj:Characterization},
we obtain the desired inequality.} \end{proof}

\begin{corollary}\label{Breg:cor1:gradient} \soumitra{Let $f\in\mathscr{B}(\mathbb{R}^{n})$
and $g_{j}\in\mathscr{B}(\mathbb{R}^{m_{j}})$ for $j=1,2,\dots,t$.
Suppose, for each $i=1,2,\dots,r$, the pair $(f,C_{i})$ satisfies
the hypotheses of Lemma \ref{lem1:Breg:gradient}. Additionally, assume
that for each $j=1,2,\dots,t$ and $h=1,2,\dots,\ell_{j}$, the pair
$(g_{j},Q^{j}_{h})$ satisfies the hypotheses of Lemma \ref{lem1:Breg:gradient}.
Then the function defined in \eqref{Breg:SFPMO:ineq1} is continuously
differentiable and for every $x\in\mathbb{R}^{n}$, its gradient is
given by 
\begin{eqnarray}
\begin{aligned}\nabla p_{f,g}(x)=\sum^{r}_{i=1}\alpha_{i}\nabla^{2}f(x)(x-P^{f}_{C_{i}}(x))+\sum^{t}_{j=1}\soumitra{\sum^{\ell_{j}}_{h=1}}\beta^{j}_{h}A^{T}_{j}\nabla^{2}g_{j}(A_{j}x)(A_{j}x-P^{g_{j}}_{Q^{j}_{h}}(A_{j}x)),\end{aligned}
\label{ineq1:Breg:cor1:gradient}
\end{eqnarray}
where $\nabla^{2}f$ stands for the Hessian matrix of $f$ and $\nabla^{2}g_{j}$
stands for the Hessian matrix of $g_{j}$. } \end{corollary}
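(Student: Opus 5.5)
The plan is to treat $p_{f,g}$ in \eqref{Breg:SFPMO:ineq1} as a finite positive linear combination of two kinds of terms and differentiate each one separately. The first kind is $x\mapsto D_f(P^f_{C_i}(x),x)$ on $\mathbb{R}^n$. The second kind is the composition $x\mapsto \phi^j_h(A_jx)$, where $\phi^j_h(y):=D_{g_j}(P^{g_j}_{Q^j_h}(y),y)$ is defined on $\mathbb{R}^{m_j}$. Linearity of the gradient then gives the formula, and continuity of the gradient follows because finite sums of continuous maps are continuous.

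For the first kind, the pair $(f,C_i)$ satisfies the hypotheses of Lemma \ref{lem1:Breg:gradient}. The lemma then says directly that $D_f(P^f_{C_i}(\cdot),\cdot)$ is continuously differentiable, with gradient $\nabla^2 f(x)(x-P^f_{C_i}(x))$. Multiplying by $\alpha_i$ and summing over $i$ produces the first sum in \eqref{ineq1:Breg:cor1:gradient}.

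For the second kind, I apply Lemma \ref{lem1:Breg:gradient} in the space $\mathbb{R}^{m_j}$, replacing $n$ by $m_j$, $f$ by $g_j$, and $C$ by $Q^j_h$. This is legitimate because the lemma is stated for an arbitrary Euclidean dimension and $(g_j,Q^j_h)$ satisfies its hypotheses. The lemma gives that $\phi^j_h$ is $C^1$ on $\mathbb{R}^{m_j}$ with $\nabla\phi^j_h(y)=\nabla^2 g_j(y)(y-P^{g_j}_{Q^j_h}(y))$. Next, $A_j$ is linear, hence $C^\infty$, with Jacobian $A_j$. The chain rule therefore gives that $\phi^j_h\circ A_j$ is differentiable with $\nabla(\phi^j_h\circ A_j)(x)=A_j^T\nabla\phi^j_h(A_jx)$. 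This gradient is continuous, since it is the composition of the continuous maps $x\mapsto A_jx$, $\nabla\phi^j_h$, and $A_j^T$. Weighting by $\beta^j_h$ and summing over $h$ and $j$ produces the second sum.

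I do not expect a genuine obstacle. The only point needing care is the bookkeeping: the second application of the lemma lives in the image spaces, where $g_j$, $Q^j_h$ and the hyperplane condition \eqref{ineq1:Breg:gradient} are all read in $\mathbb{R}^{m_j}$. One must also note that the transpose $A_j^T$ appears because the gradient is the transpose of the Jacobian of the composition.
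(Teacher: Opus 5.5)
Your proposal is correct and follows the paper's proof essentially verbatim. Like the paper, you apply Lemma \ref{lem1:Breg:gradient} to each pair $(f,C_i)$ in $\mathbb{R}^n$ and to each pair $(g_j,Q^j_h)$ in $\mathbb{R}^{m_j}$, then use the chain rule with $y=A_jx$ to obtain the factor $A_j^T$, and sum with the weights $\alpha_i,\beta^j_h$; you also make explicit the continuity of the composed gradient, which the paper leaves implicit.
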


\begin{proof} \soumitra{For each $i=1,2,\dots,r$, the pair $(f,C_{i})$
satisfies the hypotheses of Lemma \ref{lem1:Breg:gradient}. Therefore,
using Lemma \ref{lem1:Breg:gradient}, we conclude that for each $i=1,2,\dots,r$,
$D_{f}(P^{f}_{C_{i}}(\cdot),\cdot)$ is a continuously differentiable
function and 
\begin{eqnarray}
\begin{aligned}\label{ineq2:Breg:cor1:gradient}\nabla D_{f}(P^{f}_{C_{i}}(x),x)=\nabla^{2}f(x)(x-P^{f}_{C_{i}}(x)),\;\text{for all}\;x\in\mathbb{R}^{n}.\end{aligned}
\end{eqnarray}
}

\soumitra{Similarly, for each $j=1,2,\dots,t$ and $h=1,2,\dots,\ell_{j}$,
the pair $(g_{j},Q^{j}_{h})$ satisfies the hypotheses of Lemma \ref{lem1:Breg:gradient}.
Therefore, using Lemma \ref{lem1:Breg:gradient}, we conclude that
for each $j=1,2,\dots,t$ and $h=1,2,\dots,\ell_{j}$, $D_{g_{j}}(P^{g_{j}}_{Q^{j}_{h}}(\cdot),\cdot)$
is a continuously differentiable function and 
\begin{eqnarray}
\begin{aligned}\label{ineq3:Breg:cor1:gradient}\nabla D_{g_{j}}(P^{g_{j}}_{Q^{j}_{h}}(y),y)=\nabla^{2}g_{j}(y)(y-P^{g_{j}}_{Q^{j}_{h}}(y)),\;\text{for all}\;y\in\mathbb{R}^{m_{j}}.\end{aligned}
\end{eqnarray}
}

\soumitra{Now, for each $j=1,2,\dots,t$ and $h=1,2,\dots,\ell_{j}$,
taking $y=A_{j}x$ in \eqref{ineq3:Breg:cor1:gradient} and applying
the chain rule, we obtain 
\begin{eqnarray}
\begin{aligned}\label{ineq4:Breg:cor1:gradient}\nabla D_{g_{j}}(P^{g_{j}}_{Q^{j}_{h}}(A_{j}x),A_{j}x)=A^{T}_{j}\nabla^{2}g_{j}(A_{j}x)(A_{j}x-P^{g_{j}}_{Q^{j}_{h}}(A_{j}x)),\;\text{for all}\;x\in\mathbb{R}^{n}.\end{aligned}
\end{eqnarray}
} \end{proof}

\begin{lemma}\label{Lemma:Breg:Diff3} \soumitra{Assume that all the hypotheses of Corollary \ref{Breg:cor1:gradient} hold.
Assume that for each $i=1,2,\dots,r,$ there exists a constant $L_1^{i}>0$ such that 
\begin{eqnarray}\label{ineq1:Lemma:Breg:Diff3}
\begin{aligned}
x\rightarrow\nabla^{2}f(x)(x-P_{C_{i}}^{f}(x))
\end{aligned}
\end{eqnarray}
is $L_1^i$-Lipschitz continuous.
Similarly, assume that
for each $j=1,2,\dots,t$ and $h=1,2,\dots,\ell_j$, there exists a constant $L_{2}^{j,h}>0$ such that 
\begin{eqnarray}\label{ineq2:Lemma:Breg:Diff3}
\begin{aligned}
    x\rightarrow\nabla^{2}g_j(A_{j}x)(A_{j}x-P_{Q_{h}^{j}}^{g_j}(A_{j}x))
\end{aligned}
\end{eqnarray}
is $L_{2}^{j,h}$-Lipschitz continuous.
Then $\nabla p_{f,g}$ is $\sum_{i=1}^{r}\alpha_{i}L_1^i+\sum_{j=1}^{t}\sum_{h=1}^{\ell_{j}}\beta_{h}^{j}L_2^{j,h}\|A_{j}\|_{2}$-Lipschitz continuous.}
\end{lemma}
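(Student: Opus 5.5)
We start from the gradient formula \eqref{ineq1:Breg:cor1:gradient} of Corollary \ref{Breg:cor1:gradient}, which is available because all of its hypotheses are assumed. Fix $x,y\in\mathbb{R}^{n}$. For $i=1,2,\dots,r$ we abbreviate
\[
F_{i}(x):=\nabla^{2}f(x)(x-P^{f}_{C_{i}}(x)),
\]
and for $j=1,2,\dots,t$ and $h=1,2,\dots,\ell_{j}$ we abbreviate
\[
G^{j}_{h}(x):=\nabla^{2}g_{j}(A_{j}x)(A_{j}x-P^{g_{j}}_{Q^{j}_{h}}(A_{j}x)).
\]
With this notation $\nabla p_{f,g}(x)=\sum_{i=1}^{r}\alpha_{i}F_{i}(x)+\sum_{j=1}^{t}\sum_{h=1}^{\ell_{j}}\beta^{j}_{h}A_{j}^{T}G^{j}_{h}(x)$. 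The plan is simply to subtract the two gradient expressions and group them term by term, exactly as in the proofs of Remark \ref{Rmk:1} and Proposition \ref{Prop:1}. This gives
\[
\nabla p_{f,g}(x)-\nabla p_{f,g}(y)=\sum_{i=1}^{r}\alpha_{i}\big(F_{i}(x)-F_{i}(y)\big)+\sum_{j=1}^{t}\sum_{h=1}^{\ell_{j}}\beta^{j}_{h}A_{j}^{T}\big(G^{j}_{h}(x)-G^{j}_{h}(y)\big).
\]

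Next we apply the triangle inequality together with the positivity of $\alpha_{i}$ and $\beta^{j}_{h}$. For the domain terms, the assumed $L^{i}_{1}$-Lipschitz continuity of the map \eqref{ineq1:Lemma:Breg:Diff3} gives $\|F_{i}(x)-F_{i}(y)\|\le L^{i}_{1}\|x-y\|$. For the image-space terms, we first use the definition of the matrix $2$-norm to get $\|A_{j}^{T}w\|\le\|A_{j}^{T}\|_{2}\|w\|$, and then the identity $\|A_{j}^{T}\|_{2}=\|A_{j}\|_{2}$ recalled in Section \ref{Sec:2}. Combining these with the assumed $L^{j,h}_{2}$-Lipschitz continuity of the map \eqref{ineq2:Lemma:Breg:Diff3}, which is Lipschitz as a function of $x$, we obtain $\|A_{j}^{T}(G^{j}_{h}(x)-G^{j}_{h}(y))\|\le\|A_{j}\|_{2}L^{j,h}_{2}\|x-y\|$. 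Summing all these bounds yields exactly the constant $\sum_{i}\alpha_{i}L^{i}_{1}+\sum_{j}\sum_{h}\beta^{j}_{h}L^{j,h}_{2}\|A_{j}\|_{2}$.

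The only point needing care is the bookkeeping in the image-space terms. The Lipschitz hypothesis \eqref{ineq2:Lemma:Breg:Diff3} is stated with respect to $x\in\mathbb{R}^{n}$, not with respect to $A_{j}x$. Consequently only one factor $\|A_{j}\|_{2}$, coming from the outer $A_{j}^{T}$, appears in the constant, rather than $\|A_{j}\|_{2}^{2}$ as in Proposition \ref{Prop:1}. No further obstacle is expected, since differentiability and the gradient formula are already provided by Corollary \ref{Breg:cor1:gradient}.
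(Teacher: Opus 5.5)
Your proposal is correct and follows the same route as the paper: the paper's proof invokes the gradient formula of Corollary~\ref{Breg:cor1:gradient} and asserts that the domain sum is $\sum_{i}\alpha_{i}L_1^i$-Lipschitz and the image sum is $\sum_{j}\sum_{h}\beta^{j}_{h}L_2^{j,h}\|A_j\|_2$-Lipschitz. You verify these two assertions explicitly, via the triangle inequality, $\|A_j^T w\|\le\|A_j^T\|_2\|w\|$, and $\|A_j^T\|_2=\|A_j\|_2$, and you correctly read the hypothesis on the image terms as Lipschitz continuity in $x$.
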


\begin{proof} 
\sloppy
\soumitra{The proof directly follows from Corollary \ref{Breg:cor1:gradient} and using the facts that 
$\sum_{i=1}^{r}\alpha_{i}\nabla^{2}f(x)(x-P_{C_{i}}^{f}(x))$
is $\sum_{i=1}^{r}\alpha_{i}L_1^i$-Lipschitz
continuous and $\sum_{j=1}^{t}\sum_{h=1}^{\ell_{j}}\beta_{h}^{j}A_{j}^{T}\nabla^{2}g_j(A_{j}x)(A_{j}x-P_{Q_{h}^{j}}^{g_j}(A_{j}x))$
is Lipschitz continuous with constant 
\begin{eqnarray}
\begin{aligned}\sum_{j=1}^{t}\sum_{h=1}^{\ell_{j}}\beta_{h}^{j}L_2^{j,h}\|A_{j}\|_{2}.\nonumber
\end{aligned}
\end{eqnarray}}
\end{proof}

\begin{remark}\label{remark1:1:1:Breg:lem:BregProj}
    \soumitra{It is important to note that we can derive the Lipschitz continuity of the function $x\rightarrow\nabla^{2}f(x)(x-P_{C_{i}}^{f}(x))$ using Lemma \ref{Breg:lem:BregProj} under the assumptions that $\nabla^2f$ is bounded and Lipschitz continuous, $f$ is twice continuously differentiable and $\sigma$-strongly convex, and there exists a constant $R_i>0$ such that 
    \begin{equation}\label{ineq1:1:1:Breg:lem:BregProj}
        \|x-P_{C_i}^f(x)\|\leq R_i,\;\text{for all}\;x\in\mathbb{R}^n.
    \end{equation}
    However, for a fixed nonempty, closed, and convex subset $C_i$ of $\mathbb{R}^n$, the latter assumption is too restrictive. Indeed, it only holds for $C_i=\mathbb{R}^n$. To see this, suppose $C_i$ is a proper subset of $\mathbb{R}^n$.  Consider the Bregman function $f(x)=(1/2)\|x\|^2$. Then $P_{C_i}^f(x)=P_{C_i}(x)$, the orthogonal projection of $x$ onto $C_i$. Let $x\notin C_i$ and $p_i=P_{C_i}(x)$. Assume that $v_i=x-p_i\neq 0.$ It is observed that $p_i=P_{C_i}(p_i+tv_i)$, $t\geq0$, which implies that 
    \begin{eqnarray}
        \text{dist}(p_i+tv_i, C_i)=\|p_i+tv_i-p_i\|=t\|v_i\|\rightarrow\infty\;\text{as}\; t\rightarrow\infty.
    \end{eqnarray}
    This contradicts the assumption in \eqref{ineq1:1:1:Breg:lem:BregProj}.}
\end{remark}

\soumitra{The following example illustrates the assumptions in \eqref{ineq1:Lemma:Breg:Diff3} and \eqref{ineq2:Lemma:Breg:Diff3} of Lemma \ref{Lemma:Breg:Diff3}.}

\begin{example}\label{Example:Lemma:Breg:Diff3}
    \soumitra{Let $f(x)=(1/2)\|x\|^2$, $x\in\mathbb{R}^n$. It is observed that $\nabla^2f(x)=I$ and $P_{C_i}^f(x)=P_{C_i}(x)$. Therefore,
    $\nabla^2f(x)(x-P_{C_i}^f(x))=(I-P_{C_i})(x)$. It is well-known that $I-P_{C_i}$ is nonexpansive for every nonempty, closed, and convex set $C_i$, i.e.,
    \begin{eqnarray}\label{ineq1:Example:Lemma:Breg:Diff3}
        \|(I-P_{C_i})(x)-(I-P_{C_i})(y)\|\leq \|x-y\|,\;\text{for all}\; x,y\in\mathbb{R}^n.
    \end{eqnarray}
    Hence, the assumption in \eqref{ineq1:Lemma:Breg:Diff3} holds with $L_1^i=1$.}

    \soumitra{Similarly, assume that $g_j(y)=(1/2)\|y\|^2$, $y\in\mathbb{R}^{m_j}$. Then $\nabla^2 g_j(y)=I$ and $P_{Q_h^j}^{g_j}(y)=P_{Q_h^j}(y)$. Since $I-P_{Q_h^j}$ is nonexpansive for every nonempty, closed, and convex set $Q_h^j$,
    \begin{eqnarray}\label{ineq2:Example:Lemma:Breg:Diff3}
        &\|&\nabla^{2}g_j(A_{j}x)(A_{j}x-P_{Q_{h}^{j}}^{g_j}(A_{j}x))-\nabla^{2}g_j(A_{j}y)(A_{j}y-P_{Q_{h}^{j}}^{g_j}(A_{j}y))\|\nonumber\\
        &\leq& \|A_j(x-y)\|\nonumber\\
        &\leq& \|A_j\|_2\|x-y\|.
    \end{eqnarray}
    Hence, the assumption in \eqref{ineq2:Lemma:Breg:Diff3} holds with $L_2^{j,h}=\|A_j\|_2$}.
\end{example}

\begin{remark}\label{rmk1:Example:Lemma:Breg:Diff3}
 \soumitra{In addition, let $C_i=\{x\in\mathbb{R}^n\vert\|x\|\leq 1\}$ in Example \ref{Example:Lemma:Breg:Diff3}, that is, let $C_i$ be the closed ball centered at $0$ with radius $1$, then 
 \begin{eqnarray}\label{ineq1:rmk1:Example:Lemma:Breg:Diff3}
 P_{C_i}(x)=
     \begin{cases}
         x,&\|x\|\leq 1\\
         \frac{x}{\|x\|},&\|x\|> 1.
     \end{cases}
 \end{eqnarray}
 Therefore, 
 \begin{eqnarray}\label{ineq2:rmk1:Example:Lemma:Breg:Diff3}
 x-P_{C_i}(x)=
     \begin{cases}
         0,&\|x\|\leq 1\\
         x-\frac{x}{\|x\|},&\|x\|> 1.
     \end{cases}
 \end{eqnarray}
 Then the assumption in \eqref{ineq1:Lemma:Breg:Diff3} still holds with $L_1^i=1$. However, for $\|x\|>1$
 \begin{eqnarray}\label{ineq5:Example:Lemma:Breg:Diff3}
     \|x-P_{C_i}(x)\|=\Big(1-\frac{1}{\|x\|}\Big)\|x\|,
 \end{eqnarray}
which is unbounded.}

\soumitra{Similarly, let
 \[
A_j=
\begin{pmatrix}
I_n\\[1mm]
0_{(m_j-n)\times n}
\end{pmatrix}
\in\mathbb{R}^{m_j\times n},
\qquad m_j\geq n,
\]
$Q_h^j(y)=\{0\}\subset\mathbb{R}^{m_j}$, in Example \ref{Example:Lemma:Breg:Diff3}.
Then the assumption in \eqref{ineq2:Lemma:Breg:Diff3} holds with $L_2^{j,h}=\|A_j\|_2=1$. Moreover, $P_{Q_h^j}(y)=0$ and $A_jx=(x,0)$. Thus, 
\begin{eqnarray}\label{ineq3:rmk1:Example:Lemma:Breg:Diff3}
    \|A_jx-P_{Q_h^j}(A_jx)\|=\|A_jx\|=\|x\|,
\end{eqnarray}
which is unbounded.}
\end{remark}

\begin{remark}\label{rmk:Breg:beforeAlgo2}  
We observed that the problem \eqref{SFPMOS} is equivalent to the
unconstrained minimization problem: 
\begin{eqnarray}
\begin{aligned}\min_{x\in\mathbb{R}^{n}}p_{f,g}(x).\end{aligned}
\label{min:gen:breg}
\end{eqnarray}
\end{remark}

\begin{breakablealgorithm}\caption{\soumitra{A Generalized Simultaneous Projection Gradient Algorithm (GSPGA) for SFPMOS}}
\label{Algo:GSPGA} \begin{algorithmic}[1]\vspace{-0.2cm}
\State \textbf{Input}: \soumitra{$x^{0}\in\mathbb{R}^{n}$.} 
\State \soumitra{Compute $L_{\nabla p_{f,g}}:=\sum_{i=1}^{r}\alpha_{i}L_1^i+\sum_{j=1}^{t}\sum_{h=1}^{\ell_{j}}\beta_{h}^{j}L_2^{j,h}\|A_{j}\|_{2}$.}
\State Choose $\lambda\in(0,2/L_{\nabla p_{f,g}})$. 
\soumitra{\For{$k=0,1,2,\dots$}
\State Compute 
\begin{eqnarray}\label{grad:breg:gen:algo}
\begin{aligned}
x^{k+1}:=x^k-\lambda\sum_{i=1}^{r}\alpha_{i}\nabla^{2}f(x^k)&(x^k-P_{C_{i}}^{f}(x^k))\\&-\lambda\sum_{j=1}^{t}\soumitra{\sum_{h=1}^{\ell_{j}}}\beta_{h}^{j}A_{j}^{T}\nabla^{2}g_j(A_{j}x^k)(A_{j}x^k-P_{Q_{h}^{j}}^{g_j}(A_{j}x^k)).
\end{aligned}
\end{eqnarray}
\EndFor}
\end{algorithmic} 
\end{breakablealgorithm}

\begin{theorem}\label{MainThm:2} \soumitra{Assume that all the hypotheses of Lemma \ref{Lemma:Breg:Diff3} hold. Let the solution set $\widetilde{\varTheta}$
of the problem \eqref{SFPMOS} is nonempty. Then any
sequence $\{x^{k}\}_{k=0}^{\infty}$ generated by Algorithm \ref{Algo:GSPGA}
converges in norm to a point in $\widetilde{\varTheta}$.}
\end{theorem}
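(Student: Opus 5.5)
The plan is to mirror the proof of Theorem \ref{MainThm:1}: identify Algorithm \ref{Algo:GSPGA} as plain gradient descent on the generalized proximity function $p_{f,g}$ of \eqref{Breg:SFPMO:ineq1}, and then apply Corollary \ref{Co1:Grad:thm}.

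\textbf{Step 1: the minimizers of $p_{f,g}$ are exactly $\widetilde{\varTheta}$.} By Corollary \ref{prox:SFPMO}, $p_{f,g}$ is a generalized proximity function for \eqref{SFPMOS}. So $p_{f,g}\geq 0$ everywhere, and $p_{f,g}(x)=0$ if and only if $x\in\widetilde{\varTheta}$. Since $\widetilde{\varTheta}\neq\varnothing$, the minimum value of $p_{f,g}$ is $0$. Hence $\arg\min_{x\in\mathbb{R}^{n}}p_{f,g}(x)=\widetilde{\varTheta}$, and this set is nonempty (Remark \ref{rmk:Breg:beforeAlgo2}).

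\textbf{Step 2: check the hypotheses of Corollary \ref{Co1:Grad:thm}.}
\begin{itemize}
\item \emph{Convexity.} The hypotheses of Lemma \ref{Lemma:Breg:Diff3} include those of Corollary \ref{Breg:cor1:gradient}, hence those of Lemma \ref{lem1:Breg:gradient} for every pair $(f,C_i)$ and $(g_j,Q^j_h)$. In particular, $D_f(P^f_{C_i}(\cdot),\cdot)$ and $D_{g_j}(P^{g_j}_{Q^j_h}(\cdot),\cdot)$ are convex. Composing the latter with the linear map $x\mapsto A_jx$ preserves convexity. A nonnegative combination with weights $\alpha_i,\beta^j_h>0$ is again convex, so $p_{f,g}$ is convex.
\item \emph{Differentiability.} By Corollary \ref{Breg:cor1:gradient}, $p_{f,g}$ is continuously differentiable, with gradient \eqref{ineq1:Breg:cor1:gradient}.
\item \emph{Lipschitz gradient.} By Lemma \ref{Lemma:Breg:Diff3}, $\nabla p_{f,g}$ is Lipschitz with constant $L_{\nabla p_{f,g}}$, which is exactly the constant computed in Algorithm \ref{Algo:GSPGA}.
\end{itemize}

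\textbf{Step 3: match the iteration and conclude.} Comparing \eqref{grad:breg:gen:algo} with \eqref{ineq1:Breg:cor1:gradient} shows that $x^{k+1}=x^k-\lambda\nabla p_{f,g}(x^k)$, with $\lambda\in(0,2/L_{\nabla p_{f,g}})$. Corollary \ref{Co1:Grad:thm} then gives convergence in norm of $\{x^k\}_{k=0}^\infty$ to a point of $\arg\min p_{f,g}$. By Step 1, this point lies in $\widetilde{\varTheta}$.

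\textbf{Main obstacle.} The only delicate point is convexity of $p_{f,g}$, because Bregman proximity terms need not be convex in general (see the remark following Lemma \ref{lem1:Breg:gradient}). Here convexity is built into the hypotheses of Lemma \ref{lem1:Breg:gradient}, so the work is to make sure it is explicitly invoked for every pair and carried correctly through the composition with $A_j$.

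A second minor check concerns the Lipschitz constant. In Lemma \ref{Lemma:Breg:Diff3} the Lipschitz property of the image-space terms is assumed as a map of $x$. Multiplying by $A_j^T$ only contributes the factor $\|A_j^T\|_2=\|A_j\|_2$, which is consistent with the stated constant $L_{\nabla p_{f,g}}$.
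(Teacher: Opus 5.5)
Your proposal is correct and follows essentially the same route as the paper. The shared steps are: identify $\arg\min p_{f,g}$ with $\widetilde{\varTheta}$ via Corollary \ref{prox:SFPMO}; take convexity from the hypotheses of Lemma \ref{lem1:Breg:gradient}; get smoothness and the $L_{\nabla p_{f,g}}$-Lipschitz gradient from Corollary \ref{Breg:cor1:gradient} and Lemma \ref{Lemma:Breg:Diff3}; then apply Corollary \ref{Co1:Grad:thm} to the gradient iteration \eqref{grad:breg:gen:algo}. The only difference is that you explicitly justify convexity (composition with $A_j$ and a positive combination), which the paper simply asserts follows from the hypotheses.
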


\begin{proof} \soumitra{Since $\widetilde{\varTheta}$ is nonempty, the problem \eqref{SFPMOS} is consistent. 
By Corollary \ref{prox:SFPMO} and Remark \ref{rmk:Breg:beforeAlgo2}, the solution set of \eqref{SFPMOS} coincides with the global minimizers of the unconstrained minimization problem in \eqref{min:gen:breg} and the minimum value of $p_{f,g}$ is zero. Moreover, by Lemma \ref{Lemma:Breg:Diff3}, the function $p_{f,g}$ is continuously
differentiable, and its gradient is $L_{\nabla p_{f,g}}$-Lipschitz continuous. 
By the given hypotheses, the function $p_{f,g}$ is convex.
Since the step-size $\lambda\in(0,2/L_{\nabla p_{f,g}})$, all the assumptions of Corollary \ref{Co1:Grad:thm} are satisfied. By the expression in \eqref{grad:breg:gen:algo},
our Algorithm \ref{Algo:GSPGA} is a standard gradient descent algorithm
for the problem \eqref{SFPMOS}. Therefore,
by applying Corollary \ref{Co1:Grad:thm}, we conclude that the sequence $\{x^{k}\}_{k=0}^{\infty}$
generated by Algorithm \ref{Algo:GSPGA} converges in norm to a point $x^*\in\arg\min_{x\in\mathbb{R}^n} p_{f,g}(x)$.
Since the set of all global minimizers of $p_{f,g}$ coincides with $\widetilde{\varTheta}$, $x^*\in\widetilde{\varTheta}$. Thus, the sequence $\{x^k\}_{k=0}^\infty$ converges in norm to a solution of the problem \eqref{SFPMOS}.}
\end{proof}

\begin{remark}\label{rmk:bregman:Algo:GSPA} 
 Algorithm \ref{Algo:GSPGA} is new and distinct from the algorithm
studied in \cite[Subsection 4.1]{SREI2022} for the problem \eqref{SFPMOS}.
\end{remark}


\section{Other Important Special Cases of \eqref{MSMSICFP}}

\label{other:special:MSMSICFP}

Specializing from point-to-set operators to single-valued operators
brings us back to many split problems, some of which we list here. 
\begin{enumerate}
\item \textbf{The split common fixed point problem}: \soumitra{Let $\mathbb{R}^{m_{j}}=\mathbb{R}^{m}$
for all $j=1,2,\dots,t$ and each point-to-set operator $C_{i}=S_{i}:\mathbb{R}^{n}\rightarrow\mathbb{R}^{n}$
be a single-valued nonexpansive operator for each $i=1,2,\dots,r$.
Let each point-to-set operator $Q^{j}_{h}=\Xi_{j}:\mathbb{R}^{m}\rightarrow\mathbb{R}^{m}$
be a single-valued nonexpansive operator for $h=1,2,\dots,\ell_{j}$,
$j=1,2,\dots,t$. Let $A_{j}=A:\mathbb{R}^{n}\rightarrow\mathbb{R}^{m}$
be a bounded linear operator for each $j=1,2,\dots,t$.} Then the
problem \eqref{MSMSICFP} consists of finding a point $x^{*}\in\mathbb{R}^{n}$
such that 
\begin{align}
x^{*}\in\bigcap\limits^{r}_{i=1}\text{Fix}(S_{i})\;\text{ and }\;A(x^{*})\in\bigcap\limits^{t}_{j=1}\text{Fix}(\Xi_{j}).\tag{SCFPP}\label{SCFPP}
\end{align}
The problem \eqref{SCFPP} was studied in \cite[Subsection 4.2]{SREI2022}
for infinite-dimensional Hilbert spaces.
\item \textbf{The fixed point problem}: Let $\mathbb{R}^{m_{j}}=\mathbb{R}^{n}$
for all $j=1,2,\dots,t$ and each point-to-set operator $C_{i}=S_{i}:\mathbb{R}^{n}\rightarrow\mathbb{R}^{n}$
be a single-valued nonexpansive operator for each $i=1,2,\dots,r$.
Assume that $t=r$. Let each point-to-set operator $Q^{j}_{h}=C_{j}$
be a single-valued nonexpansive operator and $A_{j}=I$, the identity
operator, for all $h=1,2,\dots,\ell_{j}$, $j=1,2,\dots,t$.
Then the problem \eqref{MSMSICFP} consists of finding a point $x^{*}\in\mathbb{R}^{n}$
such that 
\begin{align}
x^{*}\in\bigcap\limits^{r}_{i=1}\text{Fix}(S_{i}).\tag{FPP}\label{FPP}
\end{align}
The problem \eqref{FPP} was studied in \cite[Subsection 4.3]{SREI2022}
for infinite-dimensional Hilbert spaces.
\item \textbf{The multiple-sets split feasibility problem}: Let $\mathbb{R}^{m_{j}}=\mathbb{R}^{m}$,
$Q^{j}_{h}=Q_{j}$ be a nonempty, closed, and convex subset
of $\mathbb{R}^{m}$, $A_{j}=A$ for $h=1,2,\dots,\ell_{j}$, $j=1,2,\dots,t$,
and for $i=1,2,\dots,r$, $C_{i}=\Omega_{i}$ be a nonempty, closed,
and convex subset of $\mathbb{R}^{n}$. 
Then the problem \eqref{MSMSICFP} reduces to \textit{multiple-sets
split feasibility problem} (MSSFP), which was introduced by Censor
et al. \cite[Subsection 1.1]{YCEN2005}. That is, the problem \eqref{MSMSICFP}
then consists of finding a point $x^{*}\in\mathbb{R}^{n}$ such that
\begin{align}
x^{*}\in\Omega:=\bigcap\limits^{r}_{i=1}\Omega_{i}\;\text{ and }\;Ax^{*}\in Q:=\bigcap\limits^{t}_{j=1}Q_{j}.\tag{MSSFP}\label{MSSFP}
\end{align}
\item \textbf{The split feasibility problem}: Let $\mathbb{R}^{m_{j}}=\mathbb{R}^{m}$,
$Q^{j}_{h}=Q$ be a nonempty, closed, and convex subset of $\mathbb{R}^{m}$,
$A_{j}=A$ for $h=1,2,\dots,\ell_{j}$, $j=1,2,\dots,t$,
and for $i=1,2,\dots,r$, $C_{i}=C$ be a nonempty, closed, and convex
subset of $\mathbb{R}^{n}$. Then the problem \eqref{MSMSICFP} reduces
to the classical {\em split feasibility problem} (SFP), which was
first introduced and studied by Censor et al. \cite[Section 1 and Subsection 6.1]{YCEN1994}.
That is, the problem \eqref{MSMSICFP} then consists of finding a
point $x^{*}\in\mathbb{R}^{n}$ such that 
\begin{align}
x^{*}\in C\;\text{ and }\;Ax^{*}\in Q.\tag{SFP}\label{SFP}
\end{align}
\item \textbf{The convex feasibility problem}: Let $t=r$, $\mathbb{R}^{m_{j}}=\mathbb{R}^{n}$,
$Q^{j}_{h}=\Omega_{j}$ be nonempty, closed, and convex subsets of
$\mathbb{R}^{n}$, $A_{j}=I$, the identity operator on $\mathbb{R}^{n}$,
for $h=1,2,\dots,\ell_{j}$, $j=1,2,\dots,t$, and for $i=1,2,\dots,r$,
$C_{i}=\Omega_{i}$ be nonempty, closed, and convex subsets of $\mathbb{R}^{n}$.
Then the problem \eqref{MSMSICFP} reduces to {\em convex feasibility
problem} (CFP). That is, the problem \eqref{MSMSICFP} then consists
of finding a point $x^{*}\in\mathbb{R}^{n}$ such that 
\begin{align}
x^{*}\in\Omega:=\bigcap\limits^{r}_{i=1}\Omega_{i}.\tag{CFP}\label{CFP}
\end{align}
The problem \eqref{CFP} was studied by many researchers (see, for
example, \cite{HHBA1996} and \cite[Chapter 5]{YCEN1997}).
\item \soumitra{\textbf{The split common minimization problem:} Let
$f_{i}:\mathbb{R}^{n}\rightarrow\mathbb{R}\cup\{+\infty\}$, $i=1,2,\dots,r$,
and $g_{j}:\mathbb{R}^{m}\rightarrow\mathbb{R}\cup\{+\infty\}$, $j=1,2,\dots,t$
be proper, lower semi-continuous, and convex functions. Let $\mathbb{R}^{m_{j}}=\mathbb{R}^{m}$
and $A_{j}=A:\mathbb{R}^{n}\rightarrow\mathbb{R}^{m}$ be a bounded
linear operator for all $j=1,2,\dots,t$. If we set $C_{i}(x)=\arg\min_{u\in\mathbb{R}^{n}}f_{i}(u)\neq\varnothing$,
$i=1,2,\dots,r$, $Q^{j}_{h}(y)=\arg\min_{v\in\mathbb{R}^{m}}g_{j}(v)\neq\varnothing$,
$h=1,2,\dots,\ell_{j}$, $j=1,2,\dots,t$, then the problem \eqref{MSMSICFP}
reduces to the following minimization problem: find $x^{*}\in\mathbb{R}^{n}$
such that 
\begin{eqnarray}
\begin{aligned}x^{*}\in\bigcap^{r}_{i=1}\arg\min_{u\in\mathbb{R}^{n}}f_{i}(u)\;\;\text{and}\;\;Ax^{*}\in\bigcap^{t}_{j=1}\arg\min_{v\in\mathbb{R}^{m}}g_{j}(v).\end{aligned}
\end{eqnarray}
This problem is called the \textit{split minimum point problem} in
\cite[Subsection 4.1]{SREI2020}, where it is studied in infinite-dimensional
Hilbert spaces.} 
\end{enumerate}

\section{Conclusion}

\label{Sec:conclusion}

\soumitra{We introduced the multi-split multiple-sets implicit convex
feasibility problem (MSMS-ICFP), which provides a general framework
for feasibility-seeking problems involving implicit constraint sets
in the domain space and multiple split implicit constraint sets in
possibly different image spaces. The formulation extends the implicit
convex feasibility problem and many other well-known inverse split
problems. We employed a proximity function to reformulate the problem
\eqref{MSMSICFP} as an unconstrained minimization problem. We analyzed
various results on the properties of the proximity function and established
convergence of the sequence generated by the proposed simultaneous
projection gradient method in the consistent case. Additionally, we
presented a Bregman-distance-based generalization. Since the domain
and image constraints belong to different Euclidean spaces, the generalization
uses appropriate Bregman functions in the relevant spaces. For the
split feasibility problem with multiple output sets, this led to a
generalized simultaneous projection gradient method. We established
norm convergence of the sequence generated by the algorithm in the
consistent case. The special cases of the problem \eqref{MSMSICFP}
are presented in this work to illustrate the scope of the proposed framework
and its relation to various existing models in feasibility, fixed
point theory, and split inverse problems. Moreover, they show that
the problem \eqref{MSMSICFP} can serve as a common setting for studying
apparently different problems. During our analysis, a number of open
research questions emerge. And, of course, future numerical experiments
and applications would be useful for demonstrating the practical performance
of the proposed methods.}




\medskip{}

\textbf{Acknowledgments}. This work is supported by U.S. National Institutes of
Health (NIH) Grant Number R01CA266467 and by the Cooperation Program
in Cancer Research of the German Cancer Research Center (DKFZ) and
Israel’s Ministry of Innovation, Science and Technology (MOST).




\end{document}